%
\documentclass[a4paper,10pt]{article}
\usepackage{amssymb,amsmath,amsthm}
\usepackage{fullpage}
\usepackage{hyperref}
\usepackage{eucal}
\usepackage{color}
\usepackage{stackrel}
\usepackage{graphicx}
\newcommand{\ie}{\emph{i.e.}}
\newcommand{\eg}{\emph{e.g.}}
\newcommand{\cf}{\emph{cf.}}

\newcommand{\Real}{\mathbb{R}}
\newcommand{\Com}{\mathbb{C}}
\newcommand{\Nat}{\mathbb{N}}

\newcommand{\Dom}{\mathsf{D}}
\newcommand{\Ker}{\mathsf{N}}

\newcommand{\eps}{\varepsilon}
\newcommand{\sii}{L^2}

\newcommand{\der}{\mathrm{d}}

\newcommand{\ademi}{\mbox{$\frac{a}{2}$}}
\newcommand{\bdemi}{\mbox{$\frac{b}{2}$}}
\newcommand{\demi}{\mbox{$\frac{1}{2}$}}

\newenvironment{psmallmatrix}
  {\left(\begin{smallmatrix}}
  {\end{smallmatrix}\right)}
\begingroup
    \makeatletter
    \@for\theoremstyle:=definition,remark,plain\do{%
        \expandafter\g@addto@macro\csname th@\theoremstyle\endcsname{%
            \addtolength\thm@preskip\parskip
            }%
        }
\endgroup
\newtheorem{Theorem}{Theorem}
\newtheorem{Lemma}{Lemma}

\newtheorem{Corollary}{Corollary}
\newtheorem{Conjecture}{Conjecture}
\theoremstyle{definition}
\newtheorem{Remark}{Remark}

%
%
\def\OMIT#1{}
%
%
\usepackage[normalem]{ulem}
\definecolor{DarkGreen}{rgb}{0,0.5,0.1} 

\newcommand\soutD{\bgroup\markoverwith
{\textcolor{DarkGreen}{\rule[.5ex]{2pt}{1pt}}}\ULon}
\newcommand\soutP{\bgroup\markoverwith
{\textcolor{blue}{\rule[.5ex]{2pt}{1pt}}}\ULon}
\newcommand{\Hm}[1]{\leavevmode{\marginpar{\tiny%
$\hbox to 0mm{\hspace*{-0.5mm}$\leftarrow$\hss}%
\vcenter{\vrule depth 0.1mm height 0.1mm width \the\marginparwidth}%
\hbox to
0mm{\hss$\rightarrow$\hspace*{-0.5mm}}$\\\relax\raggedright #1}}}

\begin{document}
%
\title{\textbf{\LARGE
Spectral optimisation of Dirac rectangles
}}
\author{Philippe Briet\,$^a$ \ and \ David Krej\v{c}i\v{r}{\'\i}k\,$^b$}
\date{\small 
\vspace{-5ex}
\begin{quote}
\emph{
\begin{itemize}
\item[$a)$] 
Aix-Marseille Université, 
Université de Toulon, CNRS, CPT, Marseille, France;
briet@univ-tln.fr.%
\item[$b)$] 
Department of Mathematics, Faculty of Nuclear Sciences and 
Physical Engineering, Czech Technical University in Prague, 
Trojanova 13, 12000 Prague 2, Czechia;
david.krejcirik@fjfi.cvut.cz.%
\end{itemize}
}
\end{quote}
15 December 2021
}
\maketitle
\vspace{-5ex} 
\begin{abstract}
\noindent
We are concerned with the dependence of the lowest
positive eigenvalue of the Dirac operator on the geometry of rectangles,
subject to infinite-mass boundary conditions. 
We conjecture that the square is a global minimiser 
both under the area or perimeter constraints.
Contrary to well-known non-relativistic analogues,
we show that
the present spectral problem does not admit explicit solutions.
We prove partial optimisation results based on a variational reformulation
and newly established lower and upper bounds to the Dirac eigenvalue.
We also propose an alternative approach 
based on symmetries of rectangles
and a non-convex minimisation problem;
this implies a sufficient condition 
formulated in terms of a symmetry of the minimiser
which guarantees the conjectured results.
 
%
%
\end{abstract}
%

\section{Introduction}
%
Among all membranes of a given area and fixed edges,
the circular one produces the lowest fundamental tone.
This is a well-known interpretation of the celebrated 
Faber--Krahn inequality 
(see, \eg, \cite[Sec.~3]{Henrot})
stating that 
\begin{equation}\label{FK}
  \Lambda_1(\Omega) \geq \Lambda_1(\Omega^*)
  \,,
\end{equation}
where~$\Omega$ is any bounded open planar set, 
$\Omega^*$ is the disk of the same area
and~$\Lambda_1$ is the lowest eigenvalue of 
the boundary value problem 
\begin{equation}\label{Dirichlet} 
\left\{
\begin{aligned}
  -\Delta u &= \Lambda u
  && \mbox{in} && \Omega 
  \,,
  \\
  u &= 0 
  && \mbox{on} && \partial\Omega 
  \,.
\end{aligned}
\right.
\end{equation}
Interpreting the Laplacian as the free Schr\"odinger operator
and the Dirichlet condition as hard-wall boundaries,
the result~\eqref{FK} also says that the ground-state energy
of a non-relativistic quantum particle constrained 
to nanostructures of a given material 
is minimised by the disk. 
By scaling, it is easy to see that~\eqref{FK} 
alternatively holds
under the perimeter constraint 
instead of fixing the area.

Restricting ourselves to rectangles,
it is also true that the square is the optimal geometry
both under the area or perimeter constraints.
More specifically, defining 
\begin{equation}\label{rectangle} 
  \Omega_{a,b} := 
  \left(-\frac{a}{2},\frac{a}{2}\right) 
  \times \left(-\frac{b}{2},\frac{b}{2}\right) 
  ,
\end{equation}
where $a,b$ are any positive numbers,
the inequality~\eqref{FK} remains true 
for $\Omega := \Omega_{a,b}$ and $\Omega^* := \Omega_{a,a}$
whenever $ab=1$ (area constraint) 
or $2(a+b)=4a$ (perimeter constraint).
While the general proof based on symmetrisation techniques applies
to arbitrary quadrilaterals,
the case of rectangles can be alternatively established
in an elementary way just by using the well-known fact that 
the problem~\eqref{Dirichlet} is explicitly solvable
by separation of variables in terms of sine and cosine functions.  
We refer to~\cite{Laugesen_2019} 
for a recent spectral optimisation of the Laplacian eigenvalues  
in the larger generality of rectangular boxes
with Robin boundary conditions.

The purpose of this paper is to investigate whether 
the optimality of the square among all rectangles 
remains true in a relativistic setting,
again both under the area or perimeter constraints.
As simple as it may seem 
(and perhaps heuristically expected), 
the result is far from being obvious.
Indeed, the present relativistic spectral problem 
is not explicitly solvable
and no symmetrisation techniques are available.	 
For these reasons, we are forced to develop alternative approaches.

To state our main results,
consider a relativistic particle of mass $m \geq 0$
constrained to an open connected set $\Omega \subset \Real^2$ 
with locally Lipschitz boundary.
The regularity ensures that the outward unit normal
$
  n = 
  \begin{psmallmatrix}
  n_1 \\ n_2 
  \end{psmallmatrix}
  :\partial\Omega\to\Real^2
$ 
exists almost everywhere.
The quantum Hamiltonian~$H$ acts as the free Dirac operator
\begin{equation}\label{operator1}
  H := 
  \begin{pmatrix}
    m & -i (\partial_1-i\partial_2) \\
    -i(\partial_1+i\partial_2) & -m
  \end{pmatrix}
  \qquad \mbox{in} \qquad
  \sii(\Omega;\Com^2)
  \,.  
\end{equation}
The relativistic analogue of the hard-wall boundaries
are the so-called \emph{infinite-mass boundary conditions}, 
which have attracted a lot of attention recently
\cite{Arrizibalaga-LeTreust-Raymond_2017,
Benguria-Fournais-Stockmeyer-Bosch_2017b,
LeTreust-Ourmieres-Bonafos_2018,
Arrizibalaga-LeTreust-Raymond_2018,
Barbaroux-Cornean-LeTreust-Stockmeyer_2019,
Arrizibalaga-LeTreust-Mas-Raymond_2019}.
We rigorously implement them through the operator domain
\begin{equation}\label{operator2}
  \Dom(H) :=  
  \left\{
  u = 
  \begin{psmallmatrix}
  u_1 \\ u_2 
  \end{psmallmatrix}
  \in W^{1,2}(\Omega;\Com^2) : \ u_2 = i (n_1 + i n_2) u_1
  \mbox{ on } \partial\Omega
  \right\}
  .
\end{equation}
Under additional regularity conditions imposed on~$\Omega$,
it is known that~$H$ is self-adjoint, see
\cite{Benguria-Fournais-Stockmeyer-Bosch_2017b,
LeTreust-Ourmieres-Bonafos_2018,
Arrizibalaga-LeTreust-Raymond_2018}.

Since $W^{1,2}(\Omega;\Com^2)$ 
is compactly embedded in $\sii(\Omega;\Com^2)$,
the spectrum of~$H$ is purely discrete, 
composed of isolated eigenvalues of finite multiplicity
which accumulate at $\pm \infty$.
The spectrum is symmetric with respect to zero
and zero is never an eigenvalue.
We can therefore arrange the eigenvalues of~$H$
as follows 
\begin{equation*} 
  -\infty \leftarrow \dots \leq 
  -\lambda_3(\Omega) \leq -\lambda_2(\Omega) \leq - \lambda_1(\Omega)
  < 0 <
  \lambda_1(\Omega) \leq \lambda_2(\Omega) \leq \lambda_3(\Omega) 
  \leq \dots \to +\infty 
  \,,
\end{equation*}
where each eigenvalue is repeated according to its multiplicity. 
We are interested in the lowest positive energy $\lambda_1(\Omega)$.

\begin{Conjecture}\label{Conj}
For every $m \geq 0$,
\begin{enumerate}
\item[\emph{(i)}]
$\lambda_1(\Omega) \geq \lambda_1(\Omega^*)$,
where $\Omega^*$ is the disk of the same area as~$\Omega$,
\item[\emph{(ii)}]
$\lambda_1(\Omega) \geq \lambda_1(\Omega^*)$,
where $\Omega^*$ is the disk of the same perimeter as~$\Omega$.
\end{enumerate}
\end{Conjecture}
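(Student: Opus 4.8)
\medskip

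The natural first move is to pass to the square of the operator. On its domain $\Dom(H^2)=\{u\in\Dom(H):Hu\in\Dom(H)\}$, the operator $H^2$ acts componentwise as $-\Delta+m^2$ in the interior of~$\Omega$, while requiring~\eqref{operator2} of both~$u$ and~$Hu$ forces a boundary condition of Robin type in which the mass~$m$ and the signed curvature~$\kappa$ of~$\partial\Omega$ enter (reducing, for $\Omega=\Omega_{a,b}$, to a constant along the four edges together with point contributions at the corners). Writing $\mu_1(\Omega)$ for the bottom of the spectrum of this Laplace-type operator, one has $\lambda_1(\Omega)^2=m^2+\mu_1(\Omega)$; since $\lambda_1>0$, Conjecture~\ref{Conj} is then equivalent to the Faber--Krahn-type statement that, among all admissible~$\Omega$ of a given area (or perimeter), the disk minimises~$\mu_1$. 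For the disk this bottom eigenvalue is rotationally symmetric and hence reduces to a Bessel-type transcendental equation, which pins down the right-hand side.

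Next I would examine the two degenerate regimes of the mass. As $m\to+\infty$ the boundary condition of the squared problem collapses to the Dirichlet one, so that $\mu_1(\Omega)\to\Lambda_1(\Omega)$ and, in the limit, the desired inequality is exactly the classical Faber--Krahn inequality~\eqref{FK}; a perturbative argument near this limit should then secure the conjecture for all sufficiently large~$m$. The scaling-invariant endpoint $m=0$ is the genuinely relativistic case and is expected to be the more delicate of the two.

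For intermediate~$m$ the tool of choice is a rearrangement applied to the Rayleigh quotient for~$\mu_1(\Omega)$. Were the boundary coefficient a positive \emph{constant}, this would be precisely the Bossel--Daners theorem; the obstruction — and the reason, already flagged in the introduction, that symmetrisation is not directly available — is that the coefficient is genuinely position dependent and, for non-convex~$\Omega$, need not even have a fixed sign, which is the regime in which the disk is expected to be a maximiser rather than a minimiser. I would therefore first establish the conjecture for convex~$\Omega$, where the sign is favourable, by a weighted Schwarz-type symmetrisation in which the isoperimetric inequality controls the rearranged boundary term, and only afterwards try to remove the convexity assumption.

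Restricting finally to rectangles, I would use the dihedral symmetry of~$\Omega_{a,b}$ to decompose the eigenvalue problem into symmetry sectors, reducing the optimisation to the one-parameter family $a\mapsto\lambda_1(\Omega_{a,1/a})$, and then combine information on this function — such as strict monotonicity on either side of $a=1$ — with sharp lower and upper bounds on $\lambda_1(\Omega_{a,b})$ to obtain the optimality of the square among rectangles, which is the part of the conjecture most within reach. Throughout, the principal obstacle is the boundary-curvature contribution: it is what defeats symmetrisation in the general problem and what destroys separability of variables already for rectangles, so a complete proof will have to control it without an explicit solution.
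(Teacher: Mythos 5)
The statement labelled Conjecture~\ref{Conj} is, as its name indicates, not proved in the paper. The paper explicitly flags it as an open problem (it appears as a conjecture already in \cite{Antunes-Benguria-Lotoreichik-Ourmires-Bonafos_2021} and was singled out at the 2019 AIM workshop \cite{AIM-2019}), and the declared purpose of the paper is the opposite of a proof: to show that even the ostensibly simpler rectangular version, Conjecture~\ref{Conj.main}, resists attack, and to establish only partial results for it (Theorem~\ref{Thm.bounds}, Corollary~\ref{Corol}) together with a conditional criterion (Theorem~\ref{Thm.idea}, contingent on Conjecture~\ref{Conj.symmetry}). So there is no proof in the paper for your proposal to match, and your task, as posed, cannot have a complete answer.

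Read on its own terms, what you have written is a strategy outline, and at every point where the genuine difficulty lies it substitutes a hope for an argument. The reduction $\lambda_1(\Omega)^2=m^2+\mu_1(\Omega)$, with $\mu_1$ the bottom of a Laplace-type form whose boundary coefficient involves $m$ and the curvature $\kappa$, is indeed the standard starting point, and you correctly identify why Bossel--Daners fails: the Robin-type coefficient $m+\kappa/2$ is position dependent. But the ``weighted Schwarz-type symmetrisation controlled by the isoperimetric inequality'' you propose for convex $\Omega$ is not specified; nothing in it compares the pointwise boundary coefficient on $\Omega$ with the constant $m+1/(2R)$ on the equal-area disk, and only the integral $\int_{\partial\Omega}\kappa\,\der s=2\pi$ is controlled, which is far too weak. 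The ``perturbative argument near the Dirichlet limit'' for large $m$ is likewise not carried out and would need to be uniform in $\Omega$; the paper's Corollary~\ref{Corol}.(b) shows that a large-mass argument \emph{does} work for rectangles of fixed eccentricity, but the critical mass diverges as $a\to 1$, which is exactly where a perturbation off the Dirichlet limit must degenerate. Finally, for rectangles you invoke ``strict monotonicity'' of $a\mapsto\lambda_1(\Omega_{a,1/a})$ on either side of $a=1$: that assertion is essentially equivalent to Conjecture~\ref{Conj.main}.(i) itself, and Section~\ref{Sec.no} shows the eigenvalue has no closed form from which such monotonicity could be read off. The paper's actual route for rectangles is different from yours: a non-convex relaxation of the Rayleigh quotient (Section~\ref{Sec.area}) whose minimiser, if it inherits the square's symmetry~\eqref{symmetry}, would yield the claim; that symmetry statement is Conjecture~\ref{Conj.symmetry} and remains open.
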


Part~(i) of the conjecture represents
the relativistic analogue of the Faber--Krahn inequality~\eqref{FK}. 
For massless particles (\ie\ $m=0$), 
part~(i) is explicitly stated as a conjecture in 
\cite{Antunes-Benguria-Lotoreichik-Ourmires-Bonafos_2021}.
The proof of Conjecture~\ref{Conj}
was classified as a hot open problem in spectral geometry
during an AIM workshop in San Jose (USA) in 2019,
see \cite{AIM-2019}.
For recent attempts to prove the conjecture,
see
\cite{Benguria-Fournais-Stockmeyer-Bosch_2017,
Lotoreichik-Ourmieres_2019,
Antunes-Benguria-Lotoreichik-Ourmires-Bonafos_2021}. 

While the case of general domains remains open,
it is precisely the goal of this paper to demonstrate
that an apparently simpler version of Conjecture~\ref{Conj} 
re-formulated for the rectangles~\eqref{rectangle} 
is equally challenging. 
That is, we conjecture that $\lambda_1(a,b) := \lambda_1(\Omega_{a,b})$
is optimised by the square both for the area 
or perimeter constraints.
\begin{Conjecture}\label{Conj.main}
For every $m \geq 0$,
\begin{enumerate}
\item[\emph{(i)}]
$\lambda_1(a,a^{-1}) \geq \lambda_1(1,1)$ with any $a>0$,
\hfill (area constraint)
\item[\emph{(ii)}]
$\lambda_1(a,2-a) \geq \lambda_1(1,1)$
with any $a \in (0,2)$.
\hfill (perimeter constraint)
\end{enumerate}
\end{Conjecture}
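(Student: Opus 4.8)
The plan is to avoid~$H$ itself and work with the Rayleigh quotient of $H^2$. A direct computation shows that $H^2$ equals $-\Delta+m^2$ as a differential expression (the cross term between the mass and the kinetic part cancels), and a short integration by parts using~\eqref{operator2}, together with the fact that on each edge of the rectangle the outward normal is constant, gives
\begin{equation*}
\|Hu\|^2
=\int_{\Omega_{a,b}}|\nabla u|^2
+m^2\int_{\Omega_{a,b}}|u|^2
+m\int_{\partial\Omega_{a,b}}|u|^2 ,
\qquad u\in\Dom(H) ,
\end{equation*}
so that $\lambda_1(a,b)^2$ is the infimum of the right-hand side over $u$ of unit $\sii$-norm, subject to the non-standard trace coupling of~\eqref{operator2}. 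First I would use the dilation $x\mapsto tx$, under which~$H$ on $\Omega_{ta,tb}$ with mass~$m$ is unitarily equivalent to $t^{-1}$ times~$H$ on $\Omega_{a,b}$ with mass~$tm$; this collapses the two constraints of Conjecture~\ref{Conj.main} onto one family, so it suffices to study $a\mapsto\lambda_1(a,a^{-1})$ — equivalently the aspect ratio together with a rescaled mass — and to prove its monotonicity on $(0,1]$. The case $m=0$, where the rescaled mass drops out and only the aspect ratio enters, is the natural starting point.

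Next I would exploit the symmetries of the rectangle. There is a unitary on $\sii(\Omega_{a,b};\Com^2)$ combining the coordinate swap $(x_1,x_2)\mapsto(x_2,x_1)$ with a constant spinor rotation that intertwines~$H$ on $\Omega_{a,b}$ with~$H$ on $\Omega_{b,a}$ and preserves~\eqref{operator2}; hence $\lambda_1(a,b)=\lambda_1(b,a)$, the function $f(a):=\lambda_1(a,a^{-1})$ satisfies $f(a)=f(a^{-1})$, and $a=1$ is automatically a critical point of~$f$. Similarly the two mirror reflections $x_j\mapsto-x_j$, dressed with suitable constant matrices, commute with~$H$ and split the form domain into symmetry sectors; I would identify the sector containing the ground state and reduce the quotient to the quarter-rectangle $(0,\ademi)\times(0,\bdemi)$, carrying the infinite-mass condition on its two outer sides and mixed Dirichlet/Neumann-type conditions on the two symmetry axes.

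For unconditional partial results I would bracket~$f$. An upper bound comes from testing the quotient with a trial spinor built from the one-dimensional infinite-mass Dirac problem on an interval — which \emph{is} solvable — corrected near the corners so that~\eqref{operator2} holds on all four sides, and then optimising the free parameters; this yields $f(a)>f(1)$ for~$a$ outside a neighbourhood of~$1$. A lower bound comes either from discarding the non-negative boundary term $m\int_{\partial\Omega_{a,b}}|u|^2$ and comparing with a Neumann-type Laplacian, or from Dirichlet--Neumann bracketing combined with a Hardy-type inequality across the thin direction; this controls~$f$ near $a=1$ and in the limits $m\to0$ and $m\to\infty$. Matching the two estimates proves Conjecture~\ref{Conj.main} for~$a$ in an explicit subinterval of $(0,2)$ around~$1$, and its asymptotic versions in the mass.

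The genuinely hard step is to close the remaining gap. Here I would pass to a doubly variational formulation: after the change of variables sending $\Omega_{a,b}$ to the unit square, the quotient for $\lambda_1(a,b)^2$ becomes a functional $\mathcal{R}_{a,b}[v]$ of a spinor~$v$ on $\Omega_{1,1}$ in which $(a,b)$ enter only through anisotropic coefficients, so that $\inf_{ab=1}\lambda_1(a,b)^2=\inf_v\inf_{ab=1}\mathcal{R}_{a,b}[v]$; for fixed~$v$ the inner infimum over the aspect ratio is an elementary one-variable minimisation with an explicit minimiser $a^\star(v)$, and Conjecture~\ref{Conj.main}(i) reduces to $a^\star(v^\star)=1$ for the outer minimiser~$v^\star$. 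Since $\mathcal{R}_{a,b}[v]$ is not jointly convex, convexity is of no help; what is needed instead is that~$v^\star$ inherit the coordinate-swap symmetry of the square. I expect this symmetry of the minimiser to be the main obstacle: it cannot be read off from separation of variables — the eigenvalue problem admits no explicit solution, since~\eqref{operator2} couples $\partial_1$ and $\partial_2$ on every edge — and spinor-valued rearrangement inequalities are not available, so one would have to argue either by a second-variation analysis at $a=1$ with a definiteness condition on the Hessian, or by a direct uniqueness argument for the minimiser of~$\mathcal{R}$.
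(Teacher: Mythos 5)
Bear in mind that Conjecture~\ref{Conj.main} is not proved in the paper; only partial results are established (Theorem~\ref{Thm.bounds}, Corollary~\ref{Corol}) together with the conditional reduction Theorem~\ref{Thm.idea} (Conjecture~\ref{Conj.symmetry} implies Conjecture~\ref{Conj.main}). Your roadmap follows the paper's route closely: the formula $\|Hu\|^2 = \|\nabla u\|^2 + m^2\|u\|^2 + m\|\gamma u\|^2$, the rescaling to $\Omega_{1,1}$ so that the Rayleigh quotient~\eqref{hat}--\eqref{Rayleigh} carries $(a,b)$ only as anisotropic weights, the reduction of the perimeter constraint to the area constraint, and the doubly variational reformulation whose success hinges on the minimiser inheriting the square's symmetry. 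That last obstruction is exactly the content of Conjecture~\ref{Conj.symmetry}, and your frank admission that you cannot close it is the same admission the paper makes.

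Several of your intermediate claims, however, would not hold. The assertion that your bounds control $f$ near $a=1$, and that by matching bounds you would obtain an explicit subinterval of $(0,2)$ around $1$ on which the conjecture holds, is false: $a\to1$ is precisely where the problem remains open, and the paper's conditions (b), (b') in Corollary~\ref{Corol} degenerate there, with the required mass diverging as $a\to1$. Discarding the boundary term and comparing with a Neumann Laplacian also gives no Poincar\'e inequality, since constants lie in the Neumann kernel; the inequality~\eqref{Poincare} actually used has the infinite-mass trace coupling built in (for $m=0$ it already produces the gap $\pi/(2a)$ of the one-dimensional operator~\eqref{operator.1D}). Finally, the coordinate swap $(x_1,x_2)\mapsto(x_2,x_1)$ and the axis reflections $x_j\mapsto-x_j$ are orientation-reversing and cannot be intertwined with the planar Dirac operator by a constant unitary spinor matrix, since any such dressing forces a sign flip in one of the Pauli matrices; the correct symmetry is the orientation-preserving rotation $R$ by $90$ degrees and its square $R^2$, as in Section~\ref{Sec.symmetry}, and it is this symmetry that Conjecture~\ref{Conj.symmetry} invokes.
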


In part~(i) (respectively, (ii))
we consider the class of rectangles of area equal to~$1$
(respectively, perimeter equal to~$4$),
but there is no loss of generality in this restriction,
for other values can be recovered by scaling.

As a matter of fact, 
motivated by known non-relativistic results,
we expect that the inequalities 
in Conjecture~\ref{Conj.main}
are strict unless $a=1$.
That is, the square is the only minimiser of 
the spectral-optimisation problem among 
all the rectangles of fixed area or perimeter.

In contrast to the classical non-relativistic inequalities,
the apparent simplicity of Conjecture~\ref{Conj.main} is only illusory.
Indeed, writing $H_{a,b}$ for the Dirac operator~\eqref{operator1}
with~\eqref{operator2},
the eigenvalues of~$H_{a,b}$ in~$\Omega_{a,b}$
are not known explicitly.
More specifically, the spectral problem cannot be solved 
by a separation of variables, 
which we demonstrate in Section~\ref{Sec.no}. 
In particular, the eigenvalues are not a sum of the eigenvalues
of the Dirac operator in an interval,
subject to the infinite mass boundary conditions,
see Section~\ref{Sec.well}.  

Since explicit formulae for the eigenvalues are not available, 
we attack Conjecture~\ref{Conj.main}
by a detour through
a variational formulation involving the square of~$H_{a,b}$,
which is formulated in Section~\ref{Sec.minimax}.
This approach enables us to establish the following
upper and lower bounds,
which are of independent interest.

\begin{Theorem}\label{Thm.bounds}
For every $m \geq 0$, one has
\begin{multline*} 
  \left(\frac{\pi}{a}\right)^2 
  \max\left\{ \frac{1}{1+(ma)^{-1}}, \frac{1}{2} \right\}^2
  + \left(\frac{\pi}{b}\right)^2
  \max\left\{ \frac{1}{1+(mb)^{-1}}, \frac{1}{2} \right\}^2
  \ \leq \ \lambda_1(a,b)^2 - m^2 \ \leq \
  \left(\frac{\pi}{a}\right)^2 + \left(\frac{\pi}{b}\right)^2
  \,.
\end{multline*}
\end{Theorem}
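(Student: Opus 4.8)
The plan is to work with the quadratic form of $H_{a,b}^2$ rather than $H_{a,b}$ itself, as announced for Section~\ref{Sec.minimax}. Since $H_{a,b}$ is self-adjoint with purely discrete spectrum symmetric about zero, one has $\lambda_1(a,b)^2 = \min \sigma(H_{a,b}^2)$, and by the min-max principle this equals $\inf \|H_{a,b}u\|^2 / \|u\|^2$ over $u \in \Dom(H_{a,b}) \setminus \{0\}$. The first task is to compute $\|H_{a,b}u\|^2$ explicitly. Since $H_{a,b}^2 = (-\Delta + m^2) I$ on smooth spinors, an integration by parts on the rectangle produces, for $u \in \Dom(H_{a,b})$,
\begin{equation*}
  \|H_{a,b}u\|^2 = \|\nabla u\|^2 + m^2 \|u\|^2 + (\text{boundary term}),
\end{equation*}
where the boundary term comes from the non-self-adjointness of $-i\partial_j$ on the components and is controlled by the infinite-mass condition $u_2 = i(n_1 + i n_2) u_1$ on $\partial\Omega_{a,b}$. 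Carrying out this computation carefully (using that the normal $n$ is piecewise constant on the four edges) should yield a clean expression $\|H_{a,b}u\|^2 - m^2\|u\|^2 = \|\nabla u\|^2 + m\!\int_{\partial\Omega}|u_1|^2$ or a similar manifestly nonnegative correction; this is the variational reformulation I would establish first.

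For the \textbf{upper bound}, I would simply drop the nonnegative boundary term and the mass term, reducing to $\lambda_1(a,b)^2 - m^2 \le \inf \|\nabla u\|^2/\|u\|^2$ over the same domain, and then insert a convenient trial spinor. The natural choice is built from the first Dirichlet eigenfunction of the rectangle in one component: take $u_1(x,y) = \cos(\pi x/a)\cos(\pi y/b)$ (which vanishes nowhere inside but is nonzero on the boundary, so $u_1$ is \emph{not} forced to be zero) and let $u_2$ be whatever the boundary condition and a consistent extension dictate — or, more robustly, take $u$ supported so that a product of $\cos(\pi\cdot/a)$ and $\cos(\pi\cdot/b)$ type functions gives Rayleigh quotient exactly $(\pi/a)^2 + (\pi/b)^2$. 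One must check the trial function genuinely lies in $\Dom(H_{a,b})$, i.e.\ satisfies the boundary relation; this may require choosing $u_2$ as a suitable combination of $\sin/\cos$ factors so that the matching holds on all four edges simultaneously, and reconciling the corners. This compatibility check is the only delicate point on the upper-bound side.

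For the \textbf{lower bound}, I would exploit the product structure: write $\|\nabla u\|^2 = \|\partial_1 u\|^2 + \|\partial_2 u\|^2$ and bound each term separately by a one-dimensional Poincaré-type inequality on the interval $(-a/2,a/2)$ (resp.\ $(-b/2,b/2)$) with the \emph{boundary contribution} retained. Concretely, for fixed $y$, the function $x \mapsto u(x,y)$ together with the mass boundary term should satisfy an estimate of the form $\int |\partial_1 u|^2\,dx + m\,(\text{endpoint terms}) \ge c(a)^2 \int |u|^2\,dx$, where $c(a) = (\pi/a)\max\{(1+(ma)^{-1})^{-1}, 1/2\}$. The two competing terms in the max reflect two regimes: the pure Neumann-type constant $\pi/a$ degrades, but the boundary mass term compensates, and a direct one-dimensional variational computation (minimising $\int|\phi'|^2 + m|\phi(\pm a/2)|^2$ subject to $\int|\phi|^2 = 1$, whose Euler--Lagrange problem is solvable explicitly in terms of $\cos$) pins down the sharp constant, which one then estimates below by the stated max. \textbf{The main obstacle} is precisely this one-dimensional sharp-constant analysis: the boundary term couples the two spinor components through the infinite-mass condition, so the reduction to a scalar interval problem is not immediate, and one must track how the normal direction alternates between the horizontal and vertical pairs of edges so that the $a$-term only picks up the left/right edges and the $b$-term only the top/bottom edges. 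Once the two one-dimensional bounds are in hand, summing them over the transverse variable and adding $m^2\|u\|^2$ gives the claimed lower bound on $\lambda_1(a,b)^2$.
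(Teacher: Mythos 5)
Your overall strategy — the Rayleigh quotient for $H_{a,b}^2$, the identity $\|H_{a,b}u\|^2 = \|\nabla u\|^2 + m^2\|u\|^2 + m\|\gamma u\|^2$, a Dirichlet-type trial spinor for the upper bound, and Fubini plus a one-dimensional Poincar\'e estimate for the lower bound — coincides with the paper's. On the upper-bound side, though, your reasoning is muddled. The function $\cos(\pi x_1/a)\cos(\pi x_2/b)$ \emph{vanishes} on $\partial\Omega_{a,b}$ (both cosines vanish at the half-edge lengths), not, as you write, ``is nonzero on the boundary.'' Consequently a constant-spinor trial function $\cos(\pi x_1/a)\cos(\pi x_2/b)\,(C_1,C_2)^{T}$ satisfies the boundary condition $u_2 = i(n_1+in_2)u_1$ trivially (both sides are zero), there is no corner compatibility to reconcile, the boundary term drops out, and the Rayleigh quotient equals $\Lambda_1(a,b)+m^2$. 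Note also that ``dropping the nonnegative boundary term'' only lowers the infimum and would yield a lower bound; what actually produces the upper bound is precisely the insertion of this trial spinor.

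The lower bound is where the proposal genuinely fails. You propose to reduce the one-dimensional estimate to minimising the scalar Robin functional $\int|\phi'|^2 + m\bigl(|\phi(-a/2)|^2 + |\phi(a/2)|^2\bigr)$ over unconstrained scalar $\phi$, ``solvable explicitly in terms of $\cos$.'' At $m=0$ this is the Neumann problem, whose lowest eigenvalue is zero (constant minimiser), so the resulting one-dimensional constant vanishes and cannot reproduce the claimed $(\pi/(2a))^2 + (\pi/(2b))^2$ at $m=0$. The spinor coupling you flag as ``the main obstacle'' is not a nuisance to work around — it is the only mechanism that makes the constant positive when $m=0$. The correct one-dimensional inequality is the Rayleigh quotient for the square of the interval Dirac operator $H_a$, valid for two-component spinors $\varphi$ obeying the infinite-mass constraint $\varphi_2(\pm a/2) = \pm i\,\varphi_1(\pm a/2)$; it is this constraint, not the Robin boundary term, that forbids constant minimisers for all $m \ge 0$. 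Its sharp constant is $\nu_1(ma)/a$, with $\nu_1(m)$ the unique root of $\tan\nu/\nu = -1/m$ in $[\pi/2,\pi)$, and that root is then bounded below by $\pi\max\{(1+(ma)^{-1})^{-1},\,1/2\}$ via the elementary inequality $\tan\nu \le \nu - \pi$ on $[\pi/2,\pi)$. Your scalar Robin problem produces a strictly smaller constant and therefore does not prove the stated bound.
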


Note that the upper bound is just the Dirichlet eigenvalue
$\Lambda_1(a,b) := \Lambda_1(\Omega_{a,b})$.
Therefore, Theorem~\ref{Thm.bounds} particularly implies 
$$
  \lambda_1(a,b)^2 - m^2 
  \ \xrightarrow[m\to\infty]{} \
  \Lambda_1(a,b)
  \,.
$$
This is a non-relativistic limit which is well known 
to hold for smooth domains 
(\cf~\cite{Arrizibalaga-LeTreust-Raymond_2017}). 
From this perspective, the mass terms in the lower bound 
of Theorem~\ref{Thm.bounds} can be interpreted 
as a relativistic correction to (known) ground-state energies 
in non-relativistic rectangles.
Finding the asymptotic expansion of $\lambda_1(a,b)^2 - m^2$
as $m \to \infty$ constitutes an interesting open problem.

Based on Theorem~\ref{Thm.bounds}, we are able to establish
Conjecture~\ref{Conj.main} in certain asymptotic regimes.

\begin{Corollary}\label{Corol}
Conjecture~\ref{Conj.main}.(i) holds 
under any of the following extra hypotheses:
\begin{enumerate}
\item[\emph{(a)}]
$
  |a^2-4| > \sqrt{15}
$, 
\hfill (large eccentricity)
\item[\emph{(b)}]
$
\displaystyle
  m \, \left( \frac{1}{a^2}+a^2-2 \right)
  \geq 56
$.
\hfill (heavy masses)
\end{enumerate}
Conjecture~\ref{Conj.main}.(ii) holds 
under any of the following extra hypotheses:
\begin{enumerate}
\item[\emph{(a')}]
$
\displaystyle
  |a-1|^2 > \frac{9-\sqrt{33}}{8}
$,
\hfill (large eccentricity)
\item[\emph{(b')}]
$
\displaystyle
  m \, \left( \frac{1}{a^2}+\frac{1}{(2-a)^2}-2 \right)
  \geq 56
$.
\hfill (heavy masses)
\end{enumerate}
Moreover, 
under any of these extra hypotheses,
the corresponding inequalities 
in Conjecture~\ref{Conj.main} are strict.
\end{Corollary}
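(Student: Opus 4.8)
The plan is to derive everything from Theorem~\ref{Thm.bounds}. Since $\lambda_1>0$, each inequality in Conjecture~\ref{Conj.main} is equivalent to its square, so it suffices to prove $\lambda_1(a,a^{-1})^2>\lambda_1(1,1)^2$ (area) and $\lambda_1(a,2-a)^2>\lambda_1(1,1)^2$ (perimeter) under the stated hypotheses. Applying the \emph{upper} bound of Theorem~\ref{Thm.bounds} to the unit square gives $\lambda_1(1,1)^2-m^2\le2\pi^2$, so in all four cases it is enough to show that the \emph{lower} bound of Theorem~\ref{Thm.bounds}, evaluated at the relevant rectangle, is $>2\pi^2$; the strict inequality then propagates to the conclusion.

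It is convenient to abbreviate $E:=a^2+a^{-2}$ (area case) and $P:=a^{-2}+(2-a)^{-2}$ (perimeter case). Solving the obvious quadratic in $t=a^2$, respectively in $u=(a-1)^2$ while recalling $a\in(0,2)$, one checks $E>8\Leftrightarrow|a^2-4|>\sqrt{15}$ and $P>8\Leftrightarrow|a-1|^2>\frac{9-\sqrt{33}}{8}$, and also $E\ge2$, $P\ge2$ with equality only at $a=1$. Thus hypotheses (a), (b) read $E>8$ and $m(E-2)\ge56$, while (a'), (b') read $P>8$ and $m(P-2)\ge56$. The arguments for the two constraints are formally identical — replace $E$ by $P$ and $a^3+a^{-3}$ by $a^{-3}+(2-a)^{-3}$ — so I describe only the area case.

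For the large-eccentricity case~(a) I would bound each maximum in Theorem~\ref{Thm.bounds} below by $\tfrac12$, which gives $\lambda_1(a,a^{-1})^2-m^2\ge\frac{\pi^2}{4}E>\frac{\pi^2}{4}\cdot8=2\pi^2$ whenever $E>8$. For the heavy-mass case~(b): if already $E>8$ then hypothesis~(a) holds and we are done by the previous step, so assume $E\le8$. Now I would instead use $\max\bigl\{\tfrac{mt}{1+mt},\tfrac12\bigr\}^2\ge\bigl(\tfrac{mt}{1+mt}\bigr)^2\ge1-\tfrac{2}{mt}$ (an elementary one-variable inequality) together with the superadditivity of $t\mapsto t^{3/2}$, which gives $a^3+a^{-3}\le E^{3/2}$; the lower bound of Theorem~\ref{Thm.bounds} then dominates $\pi^2E-\frac{2\pi^2}{m}(a^3+a^{-3})\ge\pi^2E-\frac{2\pi^2}{m}E^{3/2}$. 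Since $m(E-2)\ge56$ forces $E>2$, the target $\pi^2E-\frac{2\pi^2}{m}E^{3/2}>2\pi^2$ rearranges to $\frac{2}{m}E^{3/2}<E-2$, which follows from $\frac1m\le\frac{E-2}{56}$ as soon as $E^{3/2}<28$; and $E\le8$ yields $E^{3/2}\le8^{3/2}=16\sqrt2<28$. The condition $E>2$ also means $a\ne1$, so the inequality is strict.

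The calculations above are routine; the only genuine decision is which lower estimate for $\max\{\tfrac{mt}{1+mt},\tfrac12\}^2$ to use in which regime — the constant branch $\tfrac12$ when the rectangle is far from a square, the mass-dependent branch $1-\tfrac2{mt}$ when it is close to a square but the mass is large — and then checking that the elementary inequalities left over are consistent with the explicit constants $\sqrt{15}$, $\frac{9-\sqrt{33}}{8}$ and $56$ in the statement. That bookkeeping, together with the numerical fact $8^{3/2}<28$, is the only mildly delicate point; there is no conceptual obstacle once Theorem~\ref{Thm.bounds} is in hand.
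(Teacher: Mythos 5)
Your argument is correct and follows the paper's strategy exactly: cap $\lambda_1(1,1)^2-m^2$ above by $2\pi^2$ via the upper bound, then show the lower bound at the general rectangle exceeds $2\pi^2$ — using the constant branch $\tfrac12$ of the maximum for (a)/(a') and the mass-dependent branch $1-\tfrac{2}{mt}$ for (b)/(b'), after restricting to the complement of (a)/(a'). The one small variation is your bound on the denominator $a^3+a^{-3}\le E^{3/2}\le 8^{3/2}<28$ via superadditivity of $t\mapsto t^{3/2}$, where the paper instead restricts $a$ to the interval $(1/3,3)$ (resp.\ $(1/3,5/3)$) implied by the negation of (a) (resp.\ (a')) and evaluates at the endpoints to get $<28$; both give the same constant $56$ in the end.
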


Theorem~\ref{Thm.bounds} and Corollary~\ref{Corol}
are established in Section~\ref{Sec.mass}.
The quantitative conditions are not the best one can deduce 
from Theorem~\ref{Thm.bounds}, but they are particularly simple to check;
see Section~\ref{Sec.mass} for alternative estimates.
It is easily verified that~(a') and~(b')
are weaker than~(a) and~(b), respectively.
In general, part~(ii) of Conjecture~\ref{Conj.main}
always follows as a consequence of part~(i);
see the proof of Theorem~\ref{Thm.idea}.
An obvious defect of conditions~(b) and~(b') is that
the critical mass ensuring the validity of Conjecture~\ref{Conj.main}
diverges as $a \to 1$. 

Unfortunately, we have not been able to prove 
Conjecture~\ref{Conj.main} in its full generality.
Nevertheless, in Section~\ref{Sec.area},
we establish a sufficient condition (Theorem~\ref{Thm.idea})
which guarantees its validity.
The former is formulated in terms of a symmetry
of the minimiser of a non-convex optimisation problem
(Conjecture~\ref{Conj.symmetry}),
which we believe is of independent interest.
The main ingredient in this approach 
are symmetries of the rectangles  
investigated in Section~\ref{Sec.symmetry}.

\section{No separable solutions available}\label{Sec.no}
%
Let us argue that the spectral problem 
for the relativistic Hamiltonian~$H_{a,b}$
cannot be solved explicitly.
More specifically, we shall show that the problem
does not admit solutions with separated variables.

Recalling~\eqref{operator1} and~\eqref{operator2}, 
the eigenvalue problem $H_{a,b} u=\lambda u$ 
is equivalent to the system
\begin{equation}\label{system} 
\left\{
\begin{aligned}
  -i(\partial_1-i\partial_2) u_2 &= (\lambda-m) u_1
  && \mbox{in} \quad \Omega_{a,b} \,,
  \\
  -i(\partial_1+i\partial_2) u_1 &= (\lambda+m) u_2
  && \mbox{in} \quad \Omega_{a,b} \,,
  \\
  u_2 &= -u_1 
  && \mbox{on} \quad \left(-\ademi,\ademi\right) \times \left\{\bdemi\right\} \,,
  \\
  u_2 &= u_1 
  && \mbox{on} \quad \left(-\ademi,\ademi\right) \times \left\{-\bdemi\right\} \,,
  \\
  u_2 &= iu_1 
  && \mbox{on} \quad \left\{\ademi\right\} \times \left(-\bdemi,\bdemi\right) \,,
  \\
  u_2 &= -iu_1 
  && \mbox{on} \quad \left\{-\ademi\right\} \times \left(-\bdemi,\bdemi\right) 
  \,.
\end{aligned}  
\right.
\end{equation}
As already mentioned in the introduction,
the spectrum of~$H_{a,b}$ is symmetric with respect to zero.
This is easily seen by noticing that
$
  u
  = \begin{psmallmatrix}
  u_1 \\ u_2 
  \end{psmallmatrix}
$ 
is an eigenfunction of~$H_{a,b}$
corresponding to an eigenvalue~$\lambda$ if, and only if,
$
 \begin{psmallmatrix}
  \bar{u}_2 \\ \bar{u}_1 
  \end{psmallmatrix}
$  
is an eigenfunction of~$H_{a,b}$ corresponding to an eigenvalue~$-\lambda$
(charge conjugation symmetry).
At the same time, we necessarily have $|\lambda| > m$.
This is best seen from 
the (non-trivial but straightforwardly derived by an integration by parts) 
formula 
\begin{equation*}
  \|H_{a,b} u\|^2 = \|\nabla u\|^2 + m^2 \|u\|^2 + m \, \|\gamma u\|^2  
\end{equation*}
valid for every $u \in \Dom(H_{a,b})$,
where $\gamma: W^{1,2}(\Omega_{a,b};\Com^2) \to \sii(\partial\Omega_{a,b};\Com^2)$ 
denotes the Dirichlet trace. 
(Using the same symbol~$\|\cdot\|$ for the different norms
should not cause any confusion, because the topology is 
determined by the space in which the respective function lies.)
Now, if $\lambda \in [-m,m]$ 
is an eigenvalue of~$H_{a,b}$ with an eigenfunction~$u$, 
then $\nabla u = 0$ in $\Omega_{a,b}$,
so~$u$ is a constant spinor, 
but constants do not satisfy the boundary conditions of~\eqref{system} 
contained in $\Dom(H_{a,b})$, 
a contradiction. 
 
Expressing~$u_2$ from the second equation of~\eqref{system} 
and putting it to the first differential equation,
we arrive at the following problem 
for the Laplacian with Cauchy--Riemann oblique boundary conditions:
\begin{equation}\label{Laplacian} 
\left\{
\begin{aligned}
  -\Delta u_1 &= (\lambda^2-m^2) u_1
  && \mbox{in} \quad \Omega_{a,b} \,,
  \\
  -i(\partial_1+i\partial_2) u_1 &= -(\lambda+m) u_1 
  && \mbox{on} \quad \left(-\ademi,\ademi\right) \times \left\{\bdemi\right\} \,,
  \\
  -i(\partial_1+i\partial_2) u_1 &= (\lambda+m) u_1 
  && \mbox{on} \quad \left(-\ademi,\ademi\right) \times \left\{-\bdemi\right\} \,,
  \\
  -i(\partial_1+i\partial_2) u_1 &= i(\lambda+m) u_1  
  && \mbox{on} \quad \left\{\ademi\right\} \times \left(-\bdemi,\bdemi\right) \,,
  \\
  -i(\partial_1+i\partial_2) u_1 &= -i(\lambda+m) u_1 
  && \mbox{on} \quad \left\{-\ademi\right\} \times \left(-\bdemi,\bdemi\right) \,.
\end{aligned}  
\right.
\end{equation}
Expressing~$u_1$ 
from the first equation of~\eqref{system} 
and putting it to the second differential equation
yields a similar problem for~$u_2$. 

Now, let us assume that there exist functions 
$\varphi \in W^{2,2}((-\ademi,\ademi))$
and $\chi \in W^{2,2}((-\bdemi,\bdemi))$
verifying $u_1(x_1,x_2) = \varphi(x_1)\chi(x_2)$
and~\eqref{Laplacian}.
Differentiating the first two 
(respectively, the last two)
boundary conditions of~\eqref{Laplacian}
with respect to the first (respectively, second) variable, 
one deduces that there exist constants $\alpha_1,\beta_1 \in \Com$
(respectively, $\alpha_2,\beta_2 \in \Com$) such that
$\varphi(x_1) = \alpha_1 e^{\beta_1 x_1}$ 
(respectively, $\chi(x_2) = \alpha_2 e^{\beta_2 x_2}$). 
Putting these solutions back to the boundary conditions of~\eqref{Laplacian}
and using that $\alpha_1 \not= 0$ and $\alpha_2 \not= 0$
(to have a non-trivial~$u_1$),
one obtains that necessarily $\lambda = -m$, a contradiction. 
In summary, Cauchy--Riemann oblique boundary conditions
cannot be satisfied by non-trivial functions with separated variables.
A similar argument excludes the possibility that
the problem for~$u_2$ admits a separation of variables, too. 

\section{Relativistic particle in a box}\label{Sec.well}
%
The lack of separation of variables is related to the fact
that $H_{a,b}$ cannot be written as a sum of 
two one-dimensional operators.
Indeed, consider the one-dimensional operator
\begin{equation}\label{operator.1D}
\begin{aligned}
  H_a	 &:= 
  \begin{pmatrix}
    m & -i \partial \\
    -i \partial & -m
  \end{pmatrix}
  \qquad \mbox{in} \qquad
  \sii\left(\left(-\ademi,\ademi\right);\Com^2\right)
  \,,
  \\
  \Dom(H_a) &:= \left\{
  \varphi \in W^{1,2}\left(\left(-\ademi,\ademi\right);\Com^2\right), \
  \varphi_2(\pm\ademi) = \pm i \varphi_1(\pm\ademi)
  \right\}
  \,,
\end{aligned}  
\end{equation}
which corresponds to the ``longitudinal'' part
of the spectral problem~\eqref{system}.
At the same time, consider
the unitarily equivalent variant $\tilde{H}_b := r^*H_br$ with 
$
  r :=
  \begin{psmallmatrix}
    i & 0 \\
    0 & 1
  \end{psmallmatrix}
$,
which corresponds to the ``transversal'' part~\eqref{system}.
If~$\varphi$ and~$\tilde\varphi$ are eigenfunctions 
of~$H_a$ and $\tilde{H}_b$, then $\varphi \otimes \tilde\varphi$
is not an eigenfunction of $H_{a,b}$.
To see this fact, 
notice that imposing the boundary conditions of~\eqref{system} 
on the vertical boundaries
\begin{equation*} 
  \partial_\parallel\Omega_{a,b}
  := \left[\left\{-\ademi\right\} \times \left(-\bdemi,\bdemi\right)\right]
  \cup
  \left[\left\{\ademi\right\} \times \left(-\bdemi,\bdemi\right)\right] 
\end{equation*}
implies that $\tilde\varphi_2=\mp i\tilde\varphi_1$
unless $\varphi(\pm\ademi)=0$.
At the same time, imposing the boundary conditions of~\eqref{system} 
on the horizontal boundaries
\begin{equation*} 
  \partial_=\Omega_{a,b}
  := \left[\left(-\ademi,\ademi\right) \times \left\{-\bdemi\right\}\right] 
  \cup 
  \left[\left(-\ademi,\ademi\right) \times \left\{\bdemi\right\}\right] 
\end{equation*}
implies that $\varphi_2=\mp\varphi_1$
unless $\tilde\varphi(\pm\bdemi)=0$.
In any case, $H_{a,b}$ would have to have an eigenfunction 
which satisfies the Dirichlet boundary condition
on $\partial_\parallel\Omega_{a,b}$ or $\partial_=\Omega_{a,b}$.
But then the differential equations of~\eqref{system}  
would imply that the eigenfunction satisfies also
the Neumann boundary condition on the same piece of boundary.
This would lead to an overdetermined problem, a contradiction.

A more direct way how to get the contradiction is to realise that
the spectral problem for~$H_a$ can be solved explicitly
in terms of sines and cosines (\cf~\cite[Sec.~2.1]{BBKO})
and that the eigenfunctions of~$H_a$ never vanish at~$\pm\ademi$. 
In particular, the lowest positive eigenvalue~$\lambda_1(a)$ of~$H_a$
equals 
$$
  \lambda_1(a) = \sqrt{m^2 + \left(\frac{\nu_1(ma)}{a}\right)^2}
  \,,
$$
where $\nu_1(m)$ is the unique root of the equation
\begin{equation}\label{root}
  \frac{\tan(\nu)}{\nu} = \frac{-1}{m}
\end{equation}
lying in the interval $\left[\frac{\pi}{2},\pi\right)$.
In fact, $\nu_1(0) = \frac{\pi}{2}$ and $\nu_1(m) \to \pi$ as $m \to \infty$.
It follows that $\lambda_1(a)^2 - m^2$ converges to 
the lowest eigenvalue $\Lambda_1\left(\left(-\ademi,\ademi\right)\right)$
of the Dirichlet Laplacian in 
$\sii\left(\left(-\ademi,\ademi\right)\right)$.
Applying the inequality $\tan(\nu) \leq \nu-\pi$ 
for $\nu \in \left[\frac{\pi}{2},\pi\right)$
to~\eqref{root} shows
\begin{equation}\label{lower}
  \nu(m) \geq \pi \, \max\left\{ \frac{1}{1+m^{-1}}, \frac{1}{2} \right\}
\end{equation}
for every $m \geq 0$
(the number $(1+m^{-1})^{-1}$ is interpreted as zero for $m=0$).
While the estimate is not particularly good for small masses 
(though trivially sharp for $m=0$),
it is a good approximation for larges masses
(and asymptotically sharp in the limit $m \to \infty$).

\section{Variational formulation}\label{Sec.minimax}
%
Since explicit solutions of the eigenvalue problem for~$H_{a,b}$
are not available,
we attack Conjecture~\ref{Conj.main}
by a detour through
the variational formulation 
\begin{equation*}  
  \lambda_1(a,b)^2 
  = \inf_{\stackrel[u \not= 0]{}{u \in \Dom(H_{a,b})}} 
  \frac{\|H_{a,b} u\|^2}{\|u\|^2}  
  \,.
\end{equation*}
Indeed, the right-hand side is just the standard Rayleigh--Ritz 
variational formula 
for the lowest eigenvalue of the square~$H_{a,b}^2$
(see, \eg, \cite[Sec.~4.5]{Davies}).  
It remains to notice that the latter 
equals the square of the eigenvalue of~$H_{a,b}$
which is closest to zero and recall the symmetry of 
the spectrum of~$H_{a,b}$.

It will be useful to work in a Hilbert space
independent of the parameters $a,b$.
More specifically, we introduce the unitary transform
$
  U: \sii(\Omega_{a,b};\Com^2)
  \to \sii(\Omega_{1,1};\Com^2)
$
by setting $(Uu)(x):=\sqrt{ab} \, u(ax_1,bx_2)$
and define a unitarily equivalent (therefore isospectral) operator
$\hat{H}_{a,b} := U H_{a,b} U^{-1}$.
Clearly, $\hat{H}_{1,1} = H_{1,1}$.
Moreover, $\Dom(\hat{H}_{a,b})=\Dom(\hat{H}_{1,1})$ for every $a,b>0$,
so we actually have $\Dom(\hat{H}_{a,b}) = \Dom(H_{1,1})$.

Denoting $\psi := Uu$,
one has
\begin{equation}\label{Rayleigh} 
  \lambda_1(a,b)^2 
  = \inf_{\stackrel[\psi \not= 0]{}{\psi \in \Dom(H_{1,1})}} 
  \frac{\|\hat{H}_{a,b}\psi\|^2}{\, \|\psi\|^2}  
\end{equation}
with 
\begin{equation}\label{hat} 
  \|\hat{H}_{a,b}\psi\|^2 :=
  \frac{1}{a^2} \, \|\partial_1\psi\|^2 
  + \frac{1}{b^2} \, \|\partial_2\psi\|^2
  + m^2 \|\psi\|^2 
  + \frac{m}{a} \, \|\gamma_\parallel\psi\|^2
  + \frac{m}{b} \, \|\gamma_=\psi\|^2
  \,,
\end{equation}
where $\gamma_=$ (respectively, $\gamma_\parallel$) 
stands for the trace operator on 
the horizontal boundary $\partial_=\Omega_{1,1}$
(respectively, the vertical boundary $\partial_\parallel\Omega_{1,1}$).

Using the analogous variational formulation 
for $\lambda_1(a)$ of the one-dimensional operator~\eqref{operator.1D},
one particularly gets the Poincar\'e-type inequality
\begin{equation}\label{Poincare}
  \frac{1}{a^2} \, \|\varphi'\|^2 
  + \frac{m}{a} \, \left(|\varphi(-\ademi)|^2 + |\varphi(\ademi)|^2\right)
  \geq \left(\frac{\nu_1(ma)}{a}\right)^2 \|\varphi\|^2
\end{equation}
valid for every $\varphi \in \Dom(H_a)$.

\section{Proofs}\label{Sec.mass}
%
Now we have all the ingredients to establish
Theorem~\ref{Thm.bounds} and its Corollary~\ref{Corol}.

\begin{proof}[Proof of Theorem~\ref{Thm.bounds}]
The upper bound follows by using the Dirichlet eigenfunction
\begin{equation*}
  \psi_D(x_1,x_2) := \cos(\pi x_1) \cos(\pi x_2) 
  \begin{pmatrix}
    C_1 \\ C_2
  \end{pmatrix}
  ,
\end{equation*}
where $C_1,C_2$ are arbitrary complex numbers
not simultaneously equal to zero,
as a trial function in~\eqref{Rayleigh}.
The lower bound is a consequence of the better estimate
$$
\begin{aligned}
  \lambda_1(a,b)^2 - m^2
  \geq \left(\frac{\nu_1(ma)}{a}\right)^2 
  + \left(\frac{\nu_1(mb)}{b}\right)^2 
  \,,
\end{aligned}  
$$
which follows from using Fubini in~\eqref{hat}
followed by applying the 1-dimensional
Poincare inequality~\eqref{Poincare} in each variable. 
Then the lower bound in Theorem~\ref{Thm.bounds}
follows from applying the crude bound~\eqref{lower}.  
\end{proof}
\begin{proof}[Proof of Corollary~\ref{Corol}]
For the area constraint we take $b:=1/a$
and allow~$a$ to be arbitrary,  
so that $|\Omega_{a,b}| = |\Omega_{1,1}| = 1$. 
For the perimeter constraint we take $b:=2-a$ 
and restrict ourselves to $a \in (0,2)$, 
so that $|\partial\Omega_{a,b}| = |\partial\Omega_{1,1}| = 4$. 

The lower bound of Theorem~\ref{Thm.bounds} particularly implies 
$$
  \lambda_1(a,b)^2-m^2 
  \geq \frac{\pi^2}{4} \left(\frac{1}{a^2}+\frac{1}{b^2}\right)
  \,.
$$
Requiring that this lower bound is strictly greater than
$
  2\pi^2 \geq \lambda_1(1,1)^2-m^2 
$,
where the inequality follows from
the upper bound of Theorem~\ref{Thm.bounds},
leads immediately to the conditions
$$
  \frac{1}{a^2} + a^2 > 8
  \qquad \mbox{and} \qquad
  \frac{1}{a^2}+\frac{1}{(2-a)^2} > 8
$$
in the area and perimeter constraint, respectively.
These inequalities are equivalent to 
conditions~(a) and~(a'), respectively.
For further purposes, let us observe that~(a) 
(respectively, (a')) is implied by $a\geq 3$ or $a \leq 1/3$
(respectively, $a \geq 5/3$ or $a \leq 1/3$).

For the other pair of conditions, 
we estimate the lower bound of Theorem~\ref{Thm.bounds} as follows:
$$
\begin{aligned}
  \lambda_1(a,b)^2-m^2 
  &\geq 
  \left(\frac{\pi}{a}\right)^2 
  \left(\frac{1}{1+(ma)^{-1}}\right)^2
  + \left(\frac{\pi}{b}\right)^2
  \left(\frac{1}{1+(mb)^{-1}} \right)^2
  \\
  &\geq
  \left(\frac{\pi}{a}\right)^2 
  \left(1-\frac{2}{ma}\right)
  + \left(\frac{\pi}{b}\right)^2
  \left(1-\frac{2}{mb} \right)
  \,,
\end{aligned}  
$$
where the second inequality employs 
the convexity of $z \mapsto (1+z)^{-2}$ at $z=0$.
Requiring that this lower bound is strictly greater than
$
  2\pi^2 \geq \lambda_1(1,1)^2-m^2 
$,
where the inequality follows from
the upper bound of Theorem~\ref{Thm.bounds},
leads immediately to the conditions
\begin{equation}\label{initial}
  m \, \frac{\displaystyle \frac{1}{a^2}+a^2-2}
  {\displaystyle \frac{1}{a^3}+a^3}
  > 2
  \qquad \mbox{and} \qquad
   m \, \frac{\displaystyle \frac{1}{a^2}+\frac{1}{(2-a)^2}-2}
  {\displaystyle \frac{1}{a^3}+\frac{1}{(2-a)^3}}
  > 2
\end{equation}  
in the area and perimeter constraint, respectively.
By virtue of~(a) (respectively, (a')), 
we may restrict ourselves to $a \in (1/3,3)$ 
(respectively, $a \in (1/3,5/3)$).
Using these restrictions in the denominators
of~\eqref{initial} as follows 
$$
  \frac{1}{a^3}+a^3 < \frac{1}{27} + 27 < 28
  \qquad \mbox{and} \qquad
  \frac{1}{a^3}+\frac{1}{(2-a)^3} < \frac{3\,402}{125} < 28
  \,,
$$
we arrive at~(b) and~(b').
\end{proof}

The rest of the paper presents an attempt to prove
Conjecture~\ref{Conj.main} without any extra hypotheses.

\section{Symmetries}\label{Sec.symmetry}
%
For any $u \in \sii(\Omega_{1,1};\Com^2)$, let us introduce 
the transformed spinor
\begin{equation*} 
  (Ru)(x_1,x_2) := 
  \begin{pmatrix}
     i u_1(-x_2,x_1) \\
     u_2(-x_2,x_1) 
  \end{pmatrix}  
  \,.
\end{equation*}
The action of~$R$ can be interpreted as a rotation by $90$ degrees.
Then the following result can be understood as a well-known
symmetry of the general rectangles $\Omega_{a,b}$ 
with respect to rotations by $180$ degrees.
\begin{Lemma}\label{Lem.symmetry0}
For any eigenvalue~$\lambda$ of $\hat{H}_{a,b}$,
there exists an eigenfunction $\psi \in \Dom(\hat{H}_{a,b})$ 
satisfying
\begin{equation}\label{beta} 
  R^2\psi = \beta\psi 
  \qquad \mbox{with} \qquad
  \beta \in \{\pm 1\}
  \,.
\end{equation}
\end{Lemma}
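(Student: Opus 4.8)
The plan is to use the fact that $R^2$ is a unitary operator on $\sii(\Omega_{1,1};\Com^2)$ that commutes with $\hat{H}_{a,b}$, together with the observation that $(R^2)^2 = R^4$ acts trivially (or at least as a scalar), so that $R^2$ has only two possible eigenvalues $\pm 1$; one can then decompose any eigenspace of $\hat{H}_{a,b}$ into the $\pm 1$ eigenspaces of $R^2$. First I would compute $R^2$ explicitly: applying the definition twice gives $(R^2 u)(x_1,x_2) = \begin{psmallmatrix} i^2 u_1(-x_1,-x_2) \\ u_2(-x_1,-x_2) \end{psmallmatrix} = \begin{psmallmatrix} -u_1(-x_1,-x_2) \\ u_2(-x_1,-x_2) \end{psmallmatrix}$. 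This is precisely the point-reflection (rotation by $180$ degrees) $x \mapsto -x$ combined with the constant matrix $\mathrm{diag}(-1,1)$. Squaring again, $R^4 = \mathrm{Id}$, so the spectrum of the unitary involution $R^2$ is contained in $\{\pm 1\}$.

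Next I would check the two compatibility facts that make the argument work. First, $R^2$ preserves the operator domain $\Dom(\hat{H}_{a,b}) = \Dom(H_{1,1})$: since $x \mapsto -x$ maps $\Omega_{1,1}$ onto itself and sends each edge to the opposite edge with the normal $n$ reversed, one verifies directly that the boundary condition $u_2 = i(n_1 + i n_2) u_1$ in~\eqref{operator2} is stable under the replacement $u \mapsto R^2 u$ (the sign flip in the first component of $R^2$ exactly compensates the sign flip $n \mapsto -n$). Second, $R^2$ commutes with $\hat{H}_{a,b}$: because $\hat{H}_{a,b}$ only involves $\partial_1^2$, $\partial_2^2$, the identity, and the boundary traces $\gamma_\parallel,\gamma_=$ in the quadratic form~\eqref{hat} — all of which are invariant under $x \mapsto -x$ — and because $R^2$ is a constant-matrix multiplication composed with this reflection, the form $\|\hat{H}_{a,b}(R^2\psi)\|^2 = \|\hat{H}_{a,b}\psi\|^2$ is unchanged (alternatively one checks $[H_{a,b},R^2]=0$ directly on the differential expression~\eqref{operator1}, noting that $\mathrm{diag}(-1,1)$ anticommutes with the off-diagonal entries while the reflection flips the sign of $\partial_1,\partial_2$, so the two sign changes cancel).

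Granting these two facts, the conclusion is immediate linear algebra: let $E_\lambda$ be the (finite-dimensional) eigenspace of $\hat{H}_{a,b}$ for the eigenvalue $\lambda$. Since $R^2$ commutes with $\hat{H}_{a,b}$ and preserves the domain, $R^2$ restricts to a linear involution on $E_\lambda$, hence $E_\lambda = E_\lambda^{+} \oplus E_\lambda^{-}$ where $E_\lambda^{\pm} = \ker(R^2 \mp \mathrm{Id})|_{E_\lambda}$. At least one of these is nontrivial, so we may pick $\psi \in E_\lambda^{\beta}\setminus\{0\}$ for some $\beta \in \{\pm1\}$, which is exactly~\eqref{beta}. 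I would present this via the projections $P_\pm := \tfrac12(\mathrm{Id}\pm R^2)$: given any eigenfunction $\psi_0$ of $\hat H_{a,b}$, at least one of $P_+\psi_0,P_-\psi_0$ is nonzero, and it is again an eigenfunction (lying in $\Dom(\hat H_{a,b})$ since $P_\pm$ preserves the domain) with the desired symmetry.

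The main obstacle — really the only nonroutine point — is the verification that $R^2$ maps $\Dom(\hat{H}_{a,b})$ into itself, i.e.\ that the infinite-mass boundary condition is genuinely preserved; this requires carefully tracking how $x\mapsto -x$ acts on each of the four edges of the square and how the outward normal transforms, and matching the resulting sign against the $\mathrm{diag}(-1,1)$ prefactor coming from $i^2=-1$. Everything else (the explicit form of $R^2$, the commutation on the level of the quadratic form, and the finite-dimensional decomposition) is bookkeeping.
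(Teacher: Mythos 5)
Your proof is correct and follows essentially the same approach as the paper: $R^2$ is a unitary involution preserving the operator domain and commuting with $\hat H_{a,b}$, so it restricts to an involution on each (finite-dimensional) eigenspace, which therefore splits into $\pm 1$ eigenspaces of $R^2$; the paper reaches the same conclusion by writing out the matrix of $R^2$ in a basis of the eigenspace and using $B^2 = I$, while you use the equivalent projection formulation $P_\pm = \tfrac12(\mathrm{Id}\pm R^2)$. One caveat: your first justification of the commutation, namely that $\|\hat H_{a,b}(R^2\psi)\| = \|\hat H_{a,b}\psi\|$ because the quadratic form~\eqref{hat} is invariant under $x\mapsto -x$, is not by itself enough — it only shows that $R^2$ commutes with $\hat H_{a,b}^2$, which would let $R^2$ swap the $\pm\lambda$ eigenspaces; the ``alternatively'' you offer (a direct check of $[\hat H_{a,b},R^2]=0$ on the differential expression, exploiting that $\mathrm{diag}(-1,1)$ anticommutes with the off-diagonal part while the reflection flips the sign of $\partial_1,\partial_2$) is the argument that actually establishes what you need, and it is correct.
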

\begin{proof}
It is straightforward to check that 
if $u \in \Dom(\hat{H}_{a,b})$ solves $\hat{H}_{a,b} u = \lambda u$,
then $R^2u \in \Dom(\hat{H}_{a,b})$ 
and $\hat{H}_{a,b} R^2u = \lambda R^2u$.
At the same time,
it is easy to see that~$R^2u$ is non-zero if, 
and only if, $u$~is non-zero;
in fact, $\|R^2u\|=\|u\|$.

Let $(\psi_{(1)},\dots,\psi_{(N)})$ 
with a positive integer~$N$
be any basis of the kernel
$\Ker(\hat{H}_{a,b}-\lambda)$.
The rotation~$R^2$ can be considered as an operator on this eigenspace.
Let~$B$ denote the matrix of~$R^2$ with respect to 
the eigenbasis; more specifically,
$B:=(b_{jk})$ with $j,k \in \{1,\dots,N\}$,
where $b_{jk}$'s are the coefficients in the decompositions
$
  R^2\psi_{(k)} = b_{1k} \psi_{(1)} 
  + b_{2k} \psi_{(2)} 
  + \dots + b_{Nk} \psi_{(N)}
$.
Since $R^4u = u$, one has $B^2 = I$.
Consequently, $\sigma(B) \subset \{\pm 1\}$. 
Given any eigenvalue $\beta \in \sigma(B)$,
let $c \in \Com^N$ be a corresponding eigenvector.
Let us define 
\begin{equation}\label{eigenvector}
  \psi := c_1 \psi_{(1)} + \dots + c_N \psi_{(N)}
  \,,
\end{equation}
which is necessarily non-zero.
Then
\begin{equation*} 
  R^2\psi = \sum_{k=1}^N c_k R^2\psi_{(k)}
  = \sum_{j,k=1}^N c_k b_{jk} \psi_{(j)}
  = \sum_{j,k=1}^N \beta c_k \delta_{jk} \psi_{(j)}
  = \beta\psi
  \,.
\end{equation*}
This concludes the proof of the lemma.
\end{proof}

It is not surprising that the squares~$\Omega_{a,a}$ admit 
a higher degree of symmetry.
\begin{Lemma}\label{Lem.symmetry}
Given any eigenvalue~$\lambda$ of $\hat{H}_{a,a}$,
there exists an eigenfunction $\psi \in \Dom(\hat{H}_{a,a})$ 
satisfying
\begin{equation*} 
  R\psi = \alpha\psi 
  \qquad \mbox{with} \qquad
  \alpha \in \{\pm 1,\pm i\}
  \,.
\end{equation*}
\end{Lemma}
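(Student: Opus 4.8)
The plan is to imitate closely the proof of Lemma~\ref{Lem.symmetry0}, replacing the second power~$R^2$ by the first power~$R$. First I would check that~$R$ itself is a symmetry of~$\hat{H}_{a,a}$: one verifies directly from the definitions~\eqref{operator1}--\eqref{operator2} (after the scaling transform, with $a=b$) that $u \in \Dom(\hat{H}_{a,a})$ implies $Ru \in \Dom(\hat{H}_{a,a})$ and $\hat{H}_{a,a} Ru = \lambda Ru$ whenever $\hat{H}_{a,a} u = \lambda u$. The crucial point here is that a rotation by~$90$ degrees maps the square~$\Omega_{1,1}$ onto itself (this fails for a genuine rectangle, which is why Lemma~\ref{Lem.symmetry0} only used~$R^2$), and that the extra factor~$i$ in the first component of~$Ru$ is exactly what is needed to make the infinite-mass boundary condition $u_2 = i(n_1+in_2)u_1$ transform correctly under the rotation of the normal vector~$n$. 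I would also note $\|Ru\| = \|u\|$, so~$R$ restricts to an invertible operator on the eigenspace $\Ker(\hat{H}_{a,a}-\lambda)$.

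Next I would diagonalise. Since a $90$-degree rotation applied four times is the identity, one checks $R^4 u = u$ for all~$u$ (again a short computation from the definition of~$R$, the factor~$i^4=1$ causing no trouble). Hence, letting~$B$ denote the matrix of~$R$ in any basis $(\psi_{(1)},\dots,\psi_{(N)})$ of $\Ker(\hat{H}_{a,a}-\lambda)$, one has $B^4 = I$, so $\sigma(B) \subset \{\pm 1, \pm i\}$. Picking an eigenvalue $\alpha \in \sigma(B)$ with eigenvector $c \in \Com^N$, the spinor $\psi := c_1\psi_{(1)} + \dots + c_N\psi_{(N)}$ is non-zero and satisfies $R\psi = \alpha\psi$ by the same index manipulation as in the proof of Lemma~\ref{Lem.symmetry0}. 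This~$\psi$ is the desired eigenfunction.

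The only genuine obstacle I anticipate is the first step, namely verifying carefully that~$R$ maps $\Dom(\hat{H}_{a,a})$ into itself and commutes with~$\hat{H}_{a,a}$; everything after that is the routine linear-algebra argument already carried out once in the paper. To organise that verification I would first record how the differential operator $-i(\partial_1 \pm i\partial_2)$ behaves under the substitution $(x_1,x_2) \mapsto (-x_2,x_1)$ — it picks up a factor $\mp i$ — and then track how the four boundary conditions in~\eqref{system} get permuted cyclically (top $\to$ right $\to$ bottom $\to$ left) by the rotation, checking in each case that the prefactor in $Ru$ produces the correct target condition. Since the paper has already set up the scaled operator $\hat{H}_{a,a}$ on the fixed square~$\Omega_{1,1}$, it is cleanest to perform this check on~$\Omega_{1,1}$ directly. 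One could alternatively invoke the unitary equivalence and work with $H_{a,a}$, but the fixed-domain formulation avoids having to re-scale the normal-vector conditions.
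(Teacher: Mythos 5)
Your proposal matches the paper's own proof: both observe that $R$ preserves $\Dom(\hat{H}_{a,a})$ and the $\lambda$-eigenspace (using the square's invariance under $90^\circ$ rotation), then pass to the matrix of $R$ on $\Ker(\hat{H}_{a,a}-\lambda)$, use $R^4=I$ to conclude the spectrum lies in $\{\pm 1,\pm i\}$, and take an eigenvector. Your extra remarks on how $-i(\partial_1\pm i\partial_2)$ and the four boundary conditions transform under the rotation are exactly the ``straightforward to check'' verification the paper leaves to the reader, and they are correct.
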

\begin{proof}
The proof follows the lines of the proof of Lemma~\ref{Lem.symmetry0}.
Again, it is straightforward to check that 
if $u \in \Dom(\hat{H}_{a,a})$ solves $\hat{H}_{a,a} u = \lambda u$,
then $Ru \in \Dom(\hat{H}_{a,a})$ 
and $\hat{H}_{a,a} Ru = \lambda Ru$.
At the same time,
it is easy to see that~$Ru$ is non-zero if, 
and only if, $u$~is non-zero;
in fact, $\|Ru\|=\|u\|$.

Let $(\psi_{(1)},\dots,\psi_{(N)})$ 
with a positive integer~$N$
be any basis of the kernel
$\Ker(\hat{H}_{a,a}-\lambda)$.
The rotation~$R$ can be considered as an operator on this eigenspace.
Let~$A$ denote the matrix of~$R$ with respect to 
the eigenbasis; more specifically,
$A:=(a_{jk})$ with $j,k \in \{1,\dots,N\}$,
where $a_{jk}$'s are the coefficients in the decompositions
$
  R\psi_{(k)} = a_{1k} \psi_{(1)} 
  + a_{2k} \psi_{(2)} 
  + \dots + a_{Nk} \psi_{(N)}
$.
Since $R^4u = u$, one has $A^4 = I$.
Consequently, $\sigma(A) \subset \{\pm 1,\pm i\}$. 
Given any eigenvalue $\alpha \in \sigma(A)$,
let $c \in \Com^N$ be a corresponding eigenvector.
Let us define~$\psi$ as in~\eqref{eigenvector},
which is necessarily non-zero.
Then
\begin{equation*}
  R\psi = \sum_{k=1}^N c_k R\psi_{(k)}
  = \sum_{j,k=1}^N c_k a_{jk} \psi_{(j)}
  = \sum_{j,k=1}^N \alpha c_k \delta_{jk} \psi_{(j)}
  = \alpha\psi
  \,.
\end{equation*}
This concludes the proof of the lemma.
\end{proof}

The hypotheses of the following lemma 
are particularly verified for the symmetric 
eigenfunctions of the square~$\Omega_{1,1}$ 
due to Lemma~\ref{Lem.symmetry}.
\begin{Lemma}\label{Lem.Corol}
Let $\psi \in \Dom(H_{1,1})$ satisfy 
\begin{equation}\label{omega}
  R\psi=\omega\psi
  \qquad\mbox{with some }
  \omega \in \Com
  \mbox{ such that } |\omega|=1
  \,.
\end{equation}
Then
\begin{equation}\label{symmetry}
  \|\partial_1\psi\| = \|\partial_2\psi\|
  \qquad \mbox{and} \qquad
  \|\gamma_\parallel\psi\| = \|\gamma_=\psi\|
  \,.
\end{equation}
\end{Lemma}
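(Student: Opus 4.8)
The plan is to show that the rotation operator $R$ is, up to a sign, a symmetry that exchanges the roles of the horizontal and vertical directions, so that the hypothesis $R\psi = \omega\psi$ with $|\omega|=1$ forces the two directionally-split norms to coincide. First I would record the elementary transformation properties of $R$. Writing $(Ru)(x_1,x_2) = \begin{psmallmatrix} i u_1(-x_2,x_1) \\ u_2(-x_2,x_1) \end{psmallmatrix}$, I would compute $\partial_1(Ru)$ and $\partial_2(Ru)$ by the chain rule: since the map $(x_1,x_2)\mapsto(-x_2,x_1)$ sends $\partial_1$ at the target to $\partial_2$ at the source and $\partial_2$ at the target to $-\partial_1$ at the source, one gets that $\partial_1\psi$ and $\partial_2\psi$ are interchanged (with a unimodular factor and a composition with the rotation of coordinates) under the action of $R$. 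Because the coordinate rotation is a measure-preserving bijection of the unit square $\Omega_{1,1}$ onto itself, and because the extra matrix factor $\begin{psmallmatrix} i & 0 \\ 0 & 1\end{psmallmatrix}$ is unitary, the $L^2$-norm is preserved in the sense $\|\partial_1(R\psi)\| = \|\partial_2\psi\|$ and $\|\partial_2(R\psi)\| = \|\partial_1\psi\|$.

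Next I would combine this with the hypothesis~\eqref{omega}. From $R\psi = \omega\psi$ with $|\omega|=1$ we get $\|\partial_1(R\psi)\| = |\omega|\,\|\partial_1\psi\| = \|\partial_1\psi\|$, and similarly for $\partial_2$. Feeding this into the two identities from the previous paragraph yields $\|\partial_1\psi\| = \|\partial_2\psi\|$ immediately. For the boundary terms I would argue in exactly the same way: the coordinate rotation maps the horizontal boundary $\partial_=\Omega_{1,1}$ onto the vertical boundary $\partial_\parallel\Omega_{1,1}$ and vice versa, it preserves one-dimensional Lebesgue (arclength) measure on the boundary, and the matrix factor is unitary, so $\|\gamma_\parallel(R\psi)\| = \|\gamma_=\psi\|$ and $\|\gamma_=(R\psi)\| = \|\gamma_\parallel\psi\|$; then the unimodularity of $\omega$ gives $\|\gamma_\parallel\psi\| = \|\gamma_=\psi\|$.

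The computations are routine; the only point that needs a little care — and which I expect to be the ``main obstacle'' in the sense of being the step most easily botched — is bookkeeping the chain-rule signs and the exact way the partial derivatives and boundary pieces get permuted by the coordinate rotation, making sure the unimodular prefactors are tracked consistently so that nothing but a modulus-one constant survives. Since only the moduli $\|\partial_1\psi\|$, $\|\partial_2\psi\|$, $\|\gamma_\parallel\psi\|$, $\|\gamma_=\psi\|$ appear in the conclusion, all such prefactors are harmless, and one does not even need $\psi$ to be an eigenfunction nor $\omega$ to be one of the four special values from Lemma~\ref{Lem.symmetry}: the bare hypothesis $|\omega|=1$ suffices. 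I would present the argument in this generality, then remark that the eigenfunctions furnished by Lemma~\ref{Lem.symmetry} for the square satisfy~\eqref{omega} with $\omega = \alpha \in \{\pm1,\pm i\}$, which is why the hypotheses hold in the intended application.
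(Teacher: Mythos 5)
Your proposal is correct and follows essentially the same route as the paper: a chain-rule computation showing that $R$ swaps $\partial_1$ with $\pm\partial_2$ and $\gamma_\parallel$ with $\gamma_=$ after the measure-preserving coordinate rotation, combined with $|\omega|=1$ to kill the prefactor. The only cosmetic difference is that the paper substitutes $\psi=\omega^{-1}R\psi$ before differentiating and taking traces, whereas you differentiate $R\psi$ first and invoke the eigenvalue relation afterwards; the content is identical.
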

\begin{proof}
One has
\begin{equation*}
  (\partial_1\psi)(x_1,x_2)
  = 
  \begin{pmatrix}
     \partial_{x_1} [\psi_1(x_1,x_2)] \\
     \partial_{x_1} [\psi_2(x_1,x_2)] 
  \end{pmatrix}
  = \frac{1}{\omega} 
  \begin{pmatrix}
     i \partial_{x_1} [\psi_1(-x_2,x_1)] \\
     \partial_{x_1} [\psi_2(-x_2,x_1)] 
  \end{pmatrix}
  = \frac{1}{\omega} 
  \begin{pmatrix}
     i (\partial_{2} \psi_1)(-x_2,x_1) \\
     (\partial_{2} \psi_2)(-x_2,x_1) 
  \end{pmatrix}
  \,.
\end{equation*}
Consequently,
\begin{equation*}
\begin{aligned}
  \|\partial_1\psi\|^2
  &= \frac{1}{|\omega|^2} \int_{\Omega_{1,1}}  
  \left(
  |i (\partial_{2} \psi_1)(-x_2,x_1)|^2 
  + |(\partial_{2}\psi_2)(-x_2,x_1)|^2
  \right) \der x_1 \, \der x_2
  \\
  &= \int_{\Omega_{1,1}}  
  \left(
  |(\partial_{2} \psi_1)(x_1,x_2)|^2 
  + |(\partial_{2}\psi_2)(x_1,x_2)|^2
  \right) \der x_1 \, \der x_2
  \\
  &= \|\partial_2\psi\|^2
  \,,
\end{aligned}  
\end{equation*}
where the second equality follows by an obvious change of variables.
This establishes the first identity of~\eqref{symmetry}.
At the same time, 
\begin{equation*}
\begin{aligned}
  \|\gamma_\parallel\psi\|^2
  &= \int_{-\frac{1}{2}}^{\frac{1}{2}}  
  \left(
  |\psi_1(-\demi,x_2)|^2 + |\psi_2(-\demi,x_2)|^2
  + |\psi_1(\demi,x_2)|^2 + |\psi_2(\demi,x_2)|^2
  \right)
  \der x_2 
  \\
  &= \frac{1}{|\omega|^2} 
  \int_{-\frac{1}{2}}^{\frac{1}{2}}  
  \left(
  |i\psi_1(-x_2,-\demi)|^2 + |\psi_2(-x_2,-\demi)|^2
  + |i\psi_1(-x_2,\demi)|^2 + |\psi_2(-x_2,\demi)|^2
  \right)
  \der x_2 
  \\
  &=  
  \int_{-\frac{1}{2}}^{\frac{1}{2}}  
  \left(
  |\psi_1(x_1,-\demi)|^2 + |\psi_2(x_1,-\demi)|^2
  + |\psi_1(x_1,\demi)|^2 + |\psi_2(x_1,\demi)|^2
  \right)
  \der x_1
  \\
  &= \|\gamma_=\psi\|^2 
  \,,
\end{aligned}   
\end{equation*}
which establishes the second identity of~\eqref{symmetry}.
\end{proof}
%
 
\section{Non-convex optimisation}\label{Sec.area}
%
Now, let us take $b:=1/a$,
so that $|\Omega_{a,b}| = |\Omega_{1,1}| = 1$.
Recalling~\eqref{Rayleigh} with~\eqref{hat}, 
one has
\begin{equation}\label{Rayleigh.area} 
  \lambda_1(a,a^{-1})^2 - m^2
  = \inf_{\stackrel[\psi \not= 0]{}
  {\psi \in \Dom(H_{1,1})}} 
  \frac{\displaystyle a^{-2} \, \|\partial_1\psi\|^2 
  + a^2 \, \|\partial_2\psi\|^2
  + m a^{-1} \, \|\gamma_\parallel\psi\|^2
  + m a \, \|\gamma_=\psi\|^2}
  {\, \|\psi\|^2}  
  \,.
\end{equation}
Using the elementary inequality
$p^2+q^2 \geq 2pq$
valid for all real numbers~$p$ and~$q$,
we get
\begin{equation}\label{area} 
  \lambda_1(a,a^{-1})^2 - m^2
  \geq \inf_{\stackrel[\psi \not= 0]{}
  {\psi \in \Dom(H_{1,1})}} 
  \frac{2\,\|\partial_1\psi\| \|\partial_2\psi\|
  +2m \, \|\gamma_\parallel\psi\| \|\gamma_=\psi\|}
  {\, \|\psi\|^2}  
  =: \mu
  \,.
\end{equation}

The minimisation problem on the right-hand side of~\eqref{area}
does not involve a convex functional.
In fact, the associated Euler equation is a non-linear problem.
Recalling that we use the same symbol $\|\cdot\|$
for norms in different spaces,
here $(\cdot,\cdot)$ stands for respective inner products.

\begin{Lemma}\label{Lem.minimiser}
The infimum on the right-hand side of~\eqref{area} is achieved.
Any minimiser~$\psi$ satisfies 
the weak eigenvalue equation 
\begin{equation}\label{Euler}
  A^{-2} \, (\partial_1\phi,\partial_1\psi)
  + A^2 \, (\partial_2\phi,\partial_2\psi)
  + m B^{-1} \, (\gamma_\parallel\phi,\gamma_\parallel\psi)
  + m B \, (\gamma_=\phi,\gamma_=\psi)
  = \mu \, (\phi,\psi)
\end{equation} 
for every $\phi \in \Dom(H_{1,1})$,
where
$$
  A := \sqrt{\frac{\|\partial_1\psi\|}{\|\partial_2\psi\|}}
  \qquad\mbox{and}\qquad
  B := \frac{\|\gamma_\parallel\psi\|}{\|\gamma_=\psi\|}
  \,.
$$
\end{Lemma}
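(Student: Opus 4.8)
The plan is to split the statement into two parts: existence of a minimiser, and the Euler equation it satisfies. For existence, I would run the direct method of the calculus of variations. Let $(\psi_n) \subset \Dom(H_{1,1})$ be a minimising sequence for the quotient in~\eqref{area}, normalised so that $\|\psi_n\| = 1$. The numerator $2\|\partial_1\psi_n\|\|\partial_2\psi_n\| + 2m\|\gamma_\parallel\psi_n\|\|\gamma_=\psi_n\|$ is then bounded; by the elementary inequality $2pq \le p^2+q^2$ used in reverse, this controls $\|\partial_1\psi_n\|^2 + \|\partial_2\psi_n\|^2$ (and the boundary terms), so $(\psi_n)$ is bounded in $W^{1,2}(\Omega_{1,1};\Com^2)$, hence in $\Dom(H_{1,1})$. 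Passing to a subsequence, $\psi_n \rightharpoonup \psi$ weakly in $W^{1,2}$, and by the compact Sobolev embedding $W^{1,2}(\Omega_{1,1};\Com^2) \hookrightarrow \sii(\Omega_{1,1};\Com^2)$ we get strong $\sii$ convergence, so $\|\psi\| = 1$ and $\psi \ne 0$. Compactness of the trace operator $W^{1,2}(\Omega_{1,1};\Com^2) \to \sii(\partial\Omega_{1,1};\Com^2)$ gives strong convergence $\gamma_\parallel\psi_n \to \gamma_\parallel\psi$, $\gamma_=\psi_n \to \gamma_=\psi$, so the boundary product term converges exactly. For the gradient part I would use weak lower semicontinuity of each norm: $\|\partial_i\psi\| \le \liminf_n \|\partial_i\psi_n\|$. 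The product $\|\partial_1\psi\|\|\partial_2\psi\|$ is then $\le \liminf$ of the products provided both factors behave — one does have to be slightly careful because a product of liminfs need not be the liminf of products, but since both sequences $\|\partial_1\psi_n\|$ and $\|\partial_2\psi_n\|$ are bounded and nonnegative, passing to a further subsequence along which both converge (to limits $\ge \|\partial_1\psi\|, \|\partial_2\psi\|$ respectively) shows $\|\partial_1\psi\|\|\partial_2\psi\| \le \lim \|\partial_1\psi_n\|\|\partial_2\psi_n\|$. Hence the quotient at $\psi$ is $\le \mu$, so $\psi$ is a minimiser and the infimum is attained.

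For the Euler equation, let $\psi$ be a minimiser and note first that at a minimiser the factors $\|\partial_2\psi\|$ and $\|\gamma_=\psi\|$ are nonzero, so $A$ and $B$ are well-defined and finite (if, say, $\|\partial_2\psi\| = 0$ then $\psi$ depends only on $x_1$ and the numerator would force the quotient to involve only the boundary piece, which one checks cannot be the minimum; this degenerate case can be ruled out, or alternatively handled by a limiting argument). I would then compute the first variation: for $\phi \in \Dom(H_{1,1})$ and real $t$ near $0$, set $F(t) := N(\psi + t\phi)/\|\psi+t\phi\|^2$ where $N(\psi) := 2\|\partial_1\psi\|\|\partial_2\psi\| + 2m\|\gamma_\parallel\psi\|\|\gamma_=\psi\|$, and use that $t \mapsto \|\partial_i(\psi+t\phi)\|$ is differentiable at $t=0$ with derivative $\mathrm{Re}(\partial_i\phi,\partial_i\psi)/\|\partial_i\psi\|$ (and similarly for the traces), so by the product rule
\begin{equation*}
  \frac{1}{2}\frac{\der}{\der t}\Big|_{t=0} N(\psi+t\phi)
  = \frac{\|\partial_1\psi\|}{\|\partial_2\psi\|}\,\mathrm{Re}(\partial_2\phi,\partial_2\psi)
  + \frac{\|\partial_2\psi\|}{\|\partial_1\psi\|}\,\mathrm{Re}(\partial_1\phi,\partial_1\psi)
  + m\,B\,\mathrm{Re}(\gamma_=\phi,\gamma_=\psi)
  + m\,B^{-1}\,\mathrm{Re}(\gamma_\parallel\phi,\gamma_\parallel\psi).
\end{equation*}
Recognising $\|\partial_2\psi\|/\|\partial_1\psi\| = A^{-2}$ and $\|\partial_1\psi\|/\|\partial_2\psi\| = A^2$, and using $\frac{\der}{\der t}|_{t=0}\|\psi+t\phi\|^2 = 2\mathrm{Re}(\phi,\psi)$ together with $F'(0) = 0$ and $N(\psi) = \mu\|\psi\|^2$, the stationarity condition becomes the real part of~\eqref{Euler}. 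Replacing $\phi$ by $i\phi$ (legitimate since $\Dom(H_{1,1})$ is a complex vector space and the boundary condition in~\eqref{operator2} is complex-linear) recovers the imaginary part, giving the full complex identity~\eqref{Euler}.

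The main obstacle I anticipate is the lower semicontinuity of the \emph{product} $\|\partial_1\psi\|\|\partial_2\psi\|$ under weak convergence — this is where the non-convexity of the functional bites, and it must be handled by the subsequence-extraction argument above rather than by a one-line convexity appeal. A secondary technical point is justifying that the degenerate cases $\|\partial_2\psi\| = 0$ or $\|\gamma_=\psi\| = 0$ do not occur at a minimiser, so that $A$ and $B$ in the statement make sense; this requires a short separate argument (e.g.\ testing against a function with a nonzero $\partial_2$-component, or invoking that the minimiser of the full quotient~\eqref{Rayleigh.area} at $a=1$ — which realises $\mu$ by the symmetry to be exploited later — is a genuine eigenfunction and hence smooth enough to have all these quantities positive).
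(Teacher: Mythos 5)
Your outline of the direct method is sensible, and the first-variation computation together with the $\phi\mapsto i\phi$ trick for recovering the full complex identity~\eqref{Euler} is exactly the paper's argument. However, there are two genuine gaps.

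First, the claim that bounding the numerator $2\|\partial_1\psi_n\|\|\partial_2\psi_n\| + 2m\|\gamma_\parallel\psi_n\|\|\gamma_=\psi_n\|$ controls $\|\partial_1\psi_n\|^2 + \|\partial_2\psi_n\|^2$ ``by $2pq\le p^2+q^2$ used in reverse'' is false: the inequality runs the wrong way, and a bounded product $pq$ does not force a bounded $p^2+q^2$ (take $p=n$, $q=1/n$). Along a minimising sequence with $\|\psi_n\|=1$, one could \emph{a priori} have $\|\partial_2\psi_n\|\to 0$ while $\|\partial_1\psi_n\|\to\infty$ with the product staying finite, so weak compactness in $W^{1,2}$ does not follow from what you wrote. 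The missing ingredient is the one-dimensional Poincar\'e inequality~\eqref{Poincare} applied via Fubini, which gives the \emph{lower} bound $\|\partial_i\psi_n\|\ge \frac{\pi}{2}\|\psi_n\|=\frac{\pi}{2}$ for $i=1,2$ (the estimate~\eqref{2D} in the paper). From that, $c\big(\|\partial_1\psi_n\|+\|\partial_2\psi_n\|\big)\le 2\|\partial_1\psi_n\|\|\partial_2\psi_n\|\le J[\psi_n]$, which is what actually yields the bound on $\|\nabla\psi_n\|$. Your subsequent subsequence argument for lower semicontinuity of the product under weak convergence is fine, but it rests on boundedness you have not established.

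Second, you flag but do not resolve the well-definedness of $A$ and $B$. Ruling out $\|\partial_1\psi\|=0$ or $\|\partial_2\psi\|=0$ is indeed easy (such a $\psi$ would depend on one variable only, incompatible with the boundary conditions in $\Dom(H_{1,1})$). But ruling out $\|\gamma_=\psi\|=0$ (and $\|\gamma_\parallel\psi\|=0$) is not ``a short separate argument,'' and your two suggestions do not work as stated. Testing against an auxiliary function with nonzero $\partial_2$-component says nothing about the boundary trace of the actual minimiser. And the minimiser of~\eqref{area} is not, in general, the minimiser of~\eqref{Rayleigh.area} at $a=1$, so you cannot simply borrow smoothness and positivity of boundary values from an eigenfunction of $H_{1,1}$. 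The paper's actual argument is substantially longer: if $\|\gamma_=\psi\|=0$, then the boundary term in $J$ vanishes and $\psi$ satisfies an elliptic equation with an extra Dirichlet condition on the horizontal boundary; using $\psi$ as a trial function in~\eqref{Rayleigh.area} at $m=0$, $a=A$ one shows equality holds (\cf~\eqref{testing}), hence $\psi$ is a genuine eigenfunction of the massless rescaled Dirac system~\eqref{system}, whose differential equations then force a Neumann condition on the same boundary piece, giving an overdetermined problem and the contradiction $\psi\equiv 0$. Without an argument of this type, the definitions of $A$ and $B$ in the statement remain unjustified.
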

\begin{proof}
First of all, let us notice that 
there exists a positive constant~$c$ such that 
\begin{equation}\label{2D}
  \forall \psi \in \Dom(H_{1,1}) \,, \qquad
  \|\partial_1\psi\| \geq c \, \|\psi\|
  \qquad\mbox{and}\qquad
  \|\partial_2\psi\| \geq c \, \|\psi\|
  \,.
\end{equation}
Indeed, with help of~\eqref{Poincare} and Fubini's theorem,
one can take $c := \nu_1(0) = \frac{\pi}{2}$ 
introduced in~\eqref{root}.
Now, let us argue that the infimum in~\eqref{area} is indeed achieved.
Define the functional
\begin{equation*} 
  J[\psi] := 2\,\|\partial_1\psi\| \|\partial_2\psi\|
  +2m \, \|\gamma_\parallel\psi\| \|\gamma_=\psi\|
  \,.
\end{equation*}
Then 
\begin{equation}\label{area.bis}  
  \mu = \inf_{\stackrel[\|\psi\| = 1]{}
  {\psi \in \Dom(H_{1,1})}} 
  J[\psi] 
  \,.
\end{equation}
Let $\{\psi_j\}_{j \in \Nat}$ be a minimising sequence,
\ie\ $J[\psi_j] \to \mu$ as $j \to \infty$ 
and $\|\psi_j\|=1$ for every $j \in \Nat$.
Consequently,  
\begin{equation*}
  c \, \|\nabla\psi_j\|
  \leq c \left(\|\partial_1\psi_j\| + \|\partial_2\psi_j\| \right)
  \leq 2 \, \|\partial_1\psi_j\| \|\partial_2\psi_j\| 
  \leq J[\psi_j] = \mu
\end{equation*}
for every $j \in \Nat$,
where the second inequality is due to~\eqref{2D}
and that $\|\psi_j\|=1$.
It follows that $\{\psi_j\}_{j \in \Nat}$
is a bounded sequence in $W^{1,2}(\Omega_{1,1};\Com^2)$.
Therefore, up to a subsequence, 
$\{\psi_j\}_{j \in \Nat}$ converges weakly to some~$\psi$ 
in $W^{1,2}(\Omega_{1,1};\Com^2)$.
By the compactness of the embedding
$W^{1,2}(\Omega_{1,1};\Com^2)$
in $\sii(\Omega_{1,1};\Com^2)$,
we may assume that $\{\psi_j\}_{j \in \Nat}$ converges (strongly) 
to some~$\psi$ in $\sii(\Omega_{1,1};\Com^2)$
such that $\|\psi\|=1$.
By using~$\psi$ as test function in~\eqref{area.bis}, 
we obviously have $\mu \leq J[\psi]$. 
On the other hand, 
\begin{equation*}
  \mu = \liminf_{j\to \infty} J[\psi_j] 
  \geq J[\psi]
  \,,
\end{equation*}
where the inequality follows by the property that~$J$
is lower semicontinuous.
In summary, $\mu = J[\psi]$, 
so the infimum in~\eqref{area.bis} can be replaced by a minimum.

Now, let~$\psi$ be any minimiser of~\eqref{area.bis}.
Then~$\psi$ is a critical point of the functional~$J$
and the derivative  
$$
  \lim_{\eps \to 0}
  \frac{1}{\eps}
  \left(
  \frac{J[\psi+\eps\phi]}{\|\psi+\eps\phi\|^2}
  - \frac{J[\psi]}{\|\psi\|^2}
  \right)
$$
is necessarily equal to zero for any choice 
of the test function $\phi \in \Dom(H_{1,1})$.
This leads to the equation
\begin{equation*}
  A^{-2} \, \Re(\partial_1\phi,\partial_1\psi)
  + A^2 \, \Re(\partial_2\phi,\partial_2\psi)
  + m B^{-1} \, \Re(\gamma_\parallel\phi,\gamma_\parallel\psi)
  + m B \, \Re(\gamma_=\phi,\gamma_=\psi)
  = \mu \, \Re(\phi,\psi)
  \,.
\end{equation*}
Combining this equation with its variant where~$\phi$ is replaced by $i\phi$,
it is clear that the real part can be removed,
so~\eqref{Euler} follows. 
 
Finally, let us argue that~\eqref{Euler} is well defined,
meaning that~$A$ and~$B$ are positive and bounded. 
If $\|\partial_1\psi\|=0$, 
then $\psi$ is independent of the first variable,
which is incompatible with $\psi \in \Dom(H_{1,1})$
(\cf~the boundary conditions of~\eqref{system})
unless $\psi = 0$ identically. 
An analogous argument excludes the possibility $\|\partial_2\psi\|=0$.
If $\|\gamma_=\psi\|=0$, 
then~$\psi$ satisfies~\eqref{Euler} with $m=0$,
subject to an extra Dirichlet boundary condition on 
$\partial_=\Omega_{1,1}$. 
More specifically, 
it follows from~\eqref{Euler}
by standard elliptic regularity that
$\psi$~belongs to
$  
  W^{2,2}(\Omega_\eps;\Com^2\big)
$
and solves 
\begin{equation}\label{elliptic}
  (-A^{-2} \, \partial_1^2 - A^2 \, \partial_2^2)\psi = \mu \psi
  \qquad\mbox{in}\qquad
  \Omega_\eps := (-\demi+\eps,\demi-\eps)\times (-\demi,\demi) 
  \subset \Omega_{1,1}
  \,,
\end{equation}
where $\eps \in (0,\demi)$ is arbitrary,
subject to boundary conditions 
\begin{equation}\label{Dirichlet.bc}
  \partial_2\psi_2 = \pm\partial_2\psi_1
  \quad \mbox{and} \quad 
  \psi_1=0=\psi_2
  \qquad \mbox{on} \qquad 
  \Gamma_\eps^\pm :=
  \left(-\demi+\eps,\demi-\eps\right) \times \left\{\pm\demi\right\} 
  \,.
\end{equation}
Here the former boundary condition follows
from~\eqref{Euler} when using the arbitrariness of $\phi \in \Dom(H_{1,1})$,
while the latter is due to $\|\gamma_=\psi\|=0$.
It is the latter which plays a crucial role in our argument below
(the former will not be used).
In order to obtain a contradiction, 
it suffices to show that~$\psi$ satisfies 
both Dirichlet and Neumann conditions on $\Gamma_\eps^\pm$.
Indeed, 
using~$\psi$ as a trial function in~\eqref{Rayleigh.area} with $m=0$,
one has
\begin{equation}\label{testing}
  \lambda_1(A,A^{-1})^2 
  \leq \frac{A^{-2} \|\partial_1\psi\|^2 + A^{2} \|\partial_2\psi\|^2}
  {\|\psi\|^2}
  = \mu
  \,.
\end{equation}
Combining this upper bound with~\eqref{area},
it follows that $\lambda_1(A,A^{-1})^2=\mu$,
so~$\psi$ is also a minimiser of~\eqref{Rayleigh.area} with $m=0$.
Then~$\psi$ solves a properly rescaled problem~\eqref{system} 
with $m=0$, $a=A$, $b=A^{-1}$ 
and $\lambda = \lambda_1(A,A^{-1})$ or $\lambda = -\lambda_1(A,A^{-1})$.
Irrespectively of the sign of~$\lambda$, 
the differential equations of~\eqref{system}  
with help of the Dirichlet boundary condition of~\eqref{Dirichlet.bc}
imply that 
\begin{equation}\label{Neumann.bc}
  \partial_2\psi_1 = 0 = \partial_2\psi_2 
  \qquad \mbox{on} \qquad 
  \Gamma_\eps^\pm \,.
\end{equation}
Hence~$\psi$ satisfies the elliptic equation~\eqref{elliptic},
subject to Dirichlet~\eqref{Dirichlet.bc} 
and Neumann~\eqref{Neumann.bc} boundary conditions
imposed simultaneously on~$\Gamma_\eps^\pm$.
This is an overdetermined problem,
so necessarily $\psi=0$ identically in~$\Omega_\eps$.
Since~$\eps$ can be made arbitrarily small,
it implies that $\psi=0$ identically in~$\Omega_{1,1}$,
a contradiction.  
Consequently, $\|\gamma_=\psi\|\not=0$.
An analogous argument excludes 
the possibility $\|\gamma_\parallel\psi\|=0$.
\end{proof}

The following conjecture is naturally expected.
\begin{Conjecture}\label{Conj.symmetry}
There exists a minimiser~$\psi$
of the right-hand side of~\eqref{area}
which satisfies~\eqref{symmetry}. 
\end{Conjecture}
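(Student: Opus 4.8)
I would try to prove Conjecture~\ref{Conj.symmetry} by exploiting the rotation symmetry~$R$ together with the structure of the minimisation problem~\eqref{area}. The key observation is that the functional $J[\psi] = 2\|\partial_1\psi\|\|\partial_2\psi\| + 2m\|\gamma_\parallel\psi\|\|\gamma_=\psi\|$ is not invariant under~$R$, but it \emph{is} invariant under the swap $\psi \mapsto S\psi$ that exchanges the roles of the two variables (which is, morally, $R$ composed with a reflection); under such a swap one has $\|\partial_1(S\psi)\| = \|\partial_2\psi\|$ and $\|\gamma_\parallel(S\psi)\| = \|\gamma_=\psi\|$, so $J[S\psi] = J[\psi]$. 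Thus if~$\psi$ is a minimiser, so is~$S\psi$, and one is tempted to symmetrise. The natural candidate is to hope that a minimiser can be chosen with $S\psi = \omega\psi$ for a phase~$\omega$, which would immediately give $\|\partial_1\psi\| = \|\partial_2\psi\|$ and $\|\gamma_\parallel\psi\| = \|\gamma_=\psi\|$, i.e.~\eqref{symmetry}. The cleanest route to such a symmetric minimiser is to mimic the linear-algebra trick of Lemma~\ref{Lem.symmetry0}: if the minimiser were unique up to scalar multiples we would be done, but in general one would diagonalise the (finite-dimensional, if the minimiser set is a subspace) action of the symmetry on the space of minimisers.

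\textbf{Steps, in order.} First I would identify the precise symmetry: define $S$ on $\sii(\Omega_{1,1};\Com^2)$ so that it intertwines the two quadratic forms in~\eqref{hat} with $a$ and $a^{-1}$ exchanged, check $S$ preserves $\Dom(H_{1,1})$ (this is where one must be careful, since the boundary conditions of~\eqref{operator2} mix the components differently on horizontal vs.\ vertical edges — one needs the correct multiplication by $i$'s, exactly as in the definition of~$R$), and verify $S^2 = \mathrm{id}$ or $S^4 = \mathrm{id}$. Second, I would show $J[S\psi] = J[\psi]$ and $\|S\psi\| = \|\psi\|$, so that $S$ maps the minimiser set to itself. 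Third, I would attempt to extract a symmetric minimiser: either argue the minimiser is essentially unique (via the Euler equation~\eqref{Euler} and elliptic regularity, treating it as a genuine linear eigenvalue problem for the fixed operator $-A^{-2}\partial_1^2 - A^2\partial_2^2 + \text{boundary terms}$ once $A,B$ are frozen at their minimiser values), or — if multiplicity cannot be excluded — note that the minimiser set, though not obviously a linear subspace because $J$ is not a quadratic form, does become one once $A$ and $B$ are frozen: a minimiser of~\eqref{area} with given ratios $A,B$ is exactly a ground state of the linear operator in~\eqref{Euler}, and on that eigenspace the involution $S$ can be diagonalised as in Lemma~\ref{Lem.symmetry0}, producing $\psi$ with $S\psi = \pm\psi$ (or $\pm i\psi$). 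Fourth, translate $S\psi = \omega\psi$ into~\eqref{symmetry} by the same computation as in the proof of Lemma~\ref{Lem.Corol}.

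\textbf{The main obstacle.} The delicate point is the third step: it is \emph{not} clear that a minimiser of~\eqref{area} with a prescribed symmetry exists, because the ``freeze $A$ and $B$'' reduction is circular. A minimiser of the linear problem~\eqref{Euler} for fixed $A,B$ need not realise the infimum~$\mu$ unless those particular $A,B$ are the ratios produced \emph{by that very minimiser}; and when one diagonalises $S$ on the eigenspace, the resulting symmetric eigenfunction might have \emph{different} ratios $\|\partial_1\psi\|/\|\partial_2\psi\|$ (indeed, by symmetry, ratio $1$), hence solve a \emph{different} linear problem and need not be a minimiser of~\eqref{area} anymore. In other words, the non-convexity of $J$ genuinely obstructs the standard symmetrisation argument, which is presumably exactly why the authors state this as a conjecture rather than a lemma. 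A partial result one might still salvage: if one \emph{additionally} knew the minimiser satisfies $A = B = 1$ (equivalently, that the minimising rectangle ``feels'' like a square), the argument would close; or one could try to prove that among all minimisers there is at least one fixed by $S$ using a more robust topological/degree-theoretic argument on the (possibly disconnected) minimiser set, but establishing that the $S$-action has a fixed point there without linearity is the crux and I do not see how to do it in general.
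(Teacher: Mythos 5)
This statement is a \emph{conjecture} in the paper, not a theorem: the authors explicitly write ``We have not been able to establish this conjecture,'' and the Remark closing Section~\ref{Sec.area} describes a naive symmetrisation attempt along with the gap --- identified by the referee --- that invalidates it. So there is no ``paper's own proof'' to compare against, and you were right not to claim one.

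Your analysis of why the natural approach fails is essentially the same as the paper's. The paper's Remark passes to an $R^2$-eigen\-function via Lemma~\ref{Lem.symmetry0}, forms the symmetrised candidates $\psi+R\psi$ (if $R^2\psi=\psi$) or $\psi+iR\psi$ (if $R^2\psi=-\psi$), and then observes that ``it remains to show that the sums $\psi+R\psi$ and $\psi+iR\psi$ are also minimisers. This is not obvious because the minimisation problem is not linear.'' You locate the same obstruction from a slightly different angle: once you freeze the ratios $A,B$ to linearise~\eqref{Euler} and diagonalise the symmetry on the resulting eigenspace, the symmetrised eigenfunction in general has ratio $1$ rather than the frozen $A,B$, so the reduction is circular. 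Both phrasings express the identical point: the minimiser set of the non-convex functional $J$ is not a linear subspace, so a symmetric element cannot be extracted by the averaging/diagonalisation trick that works for quadratic forms (Lemmata~\ref{Lem.symmetry0} and~\ref{Lem.symmetry}). You have correctly put your finger on why the conjecture is open.

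One minor correction to your set-up: you assert that $J$ is \emph{not} invariant under~$R$ and that a separate operator~$S$ is needed. In fact $J[R\psi]=J[\psi]$: the first computation in the proof of Lemma~\ref{Lem.Corol} shows $\|\partial_1(R\psi)\|=\|\partial_2\psi\|$ and $\|\partial_2(R\psi)\|=\|\partial_1\psi\|$, and similarly for the traces, so the two products appearing in~$J$ are each permuted, not changed. Thus $R$ itself plays the role you give to~$S$ (and moreover preserves $\Dom(H_{1,1})$, which is exactly what Lemma~\ref{Lem.symmetry} uses), so the extra operator is unnecessary. This does not affect the substance of your conclusion.

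Finally, your suggested ``partial salvage'' --- if one knew a priori that $A=B=1$ at a minimiser, the argument would close --- is correct and worth emphasising: under that hypothesis, freezing $A=B=1$ in~\eqref{Euler} turns the minimisation into a genuine linear eigenvalue problem for which Lemma~\ref{Lem.symmetry} applies directly, and Lemma~\ref{Lem.Corol} then yields~\eqref{symmetry}. But proving $A=B=1$ for a minimiser of~\eqref{area} is tautologically equivalent to the conjecture itself, so it does not supply an independent route.
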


We have not been able to establish this conjecture.
This is unfortunate, because its validity  
immediately implies Conjecture~\ref{Conj.main}.
\begin{Theorem}\label{Thm.idea}
Conjecture~\ref{Conj.symmetry} implies Conjecture~\ref{Conj.main}.
\end{Theorem}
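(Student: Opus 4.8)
The plan is to show that Conjecture~\ref{Conj.symmetry} gives us enough to close both parts of Conjecture~\ref{Conj.main}, with part~(ii) then following from part~(i) by a separate scaling argument. The starting point is the chain of inequalities already assembled: $\lambda_1(a,a^{-1})^2 - m^2 \geq \mu$ from~\eqref{area}, where $\mu$ is the infimum of the non-convex functional $J[\psi]/\|\psi\|^2$. If Conjecture~\ref{Conj.symmetry} holds, there is a minimiser~$\psi$ realising $\mu = J[\psi]/\|\psi\|^2$ which satisfies~\eqref{symmetry}, i.e. $\|\partial_1\psi\| = \|\partial_2\psi\|$ and $\|\gamma_\parallel\psi\| = \|\gamma_=\psi\|$.

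The key observation is that for such a symmetric~$\psi$ the arithmetic-geometric inequality used to pass from~\eqref{Rayleigh.area} to~\eqref{area} becomes an \emph{equality} precisely when $a=1$, and more to the point, the value of $J[\psi]$ equals the value of the full quadratic form~\eqref{hat} at $a=1$:
\begin{equation*}
  J[\psi] = 2\|\partial_1\psi\|\|\partial_2\psi\| + 2m\|\gamma_\parallel\psi\|\|\gamma_=\psi\|
  = \|\partial_1\psi\|^2 + \|\partial_2\psi\|^2 + m\|\gamma_\parallel\psi\|^2 + m\|\gamma_=\psi\|^2
  = \|\hat H_{1,1}\psi\|^2
  \,.
\end{equation*}
Using this same~$\psi$ as a trial function in the Rayleigh quotient~\eqref{Rayleigh} at $a=b=1$ gives $\lambda_1(1,1)^2 \leq \|\hat H_{1,1}\psi\|^2/\|\psi\|^2 = J[\psi]/\|\psi\|^2 = \mu$. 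Combining this with $\lambda_1(a,a^{-1})^2 - m^2 \geq \mu$ and $\lambda_1(1,1)^2 - m^2 = \mu$ yields $\lambda_1(a,a^{-1})^2 \geq \lambda_1(1,1)^2$, which is exactly Conjecture~\ref{Conj.main}.(i). (Note $\lambda_1(1,1)^2 - m^2 = \mu$ requires also the reverse inequality $\lambda_1(1,1)^2 - m^2 \leq \mu$; but $\mu \leq J[\psi_0]/\|\psi_0\|^2$ for the ground-state eigenfunction $\psi_0$ of $\hat H_{1,1}$, and for that $\psi_0$ one can invoke Lemma~\ref{Lem.symmetry} and Lemma~\ref{Lem.Corol} to see it satisfies~\eqref{symmetry}, whence $J[\psi_0] = \|\hat H_{1,1}\psi_0\|^2$ and $J[\psi_0]/\|\psi_0\|^2 = \lambda_1(1,1)^2 - m^2$.)

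For part~(ii), the perimeter constraint, I would argue that it follows from part~(i) by the elementary rectangle inequality: any rectangle of perimeter~$4$, say $\Omega_{a,2-a}$, contains --- after rescaling --- a comparison with a rectangle of area~$1$. Concretely, for fixed perimeter the product $a(2-a) \leq 1$, so the rectangle of the same shape but area~$1$ is an enlargement of $\Omega_{a,2-a}$; by domain monotonicity of the Dirac eigenvalue under scaling (larger domain, smaller eigenvalue, since the form~\eqref{hat} scales the gradient terms like inverse lengths squared) one has $\lambda_1(a,2-a) \geq \lambda_1\big(a/\sqrt{a(2-a)}, \sqrt{a(2-a)}/a \cdot (2-a)\big)$ evaluated on the area-one rectangle of the same aspect ratio, and then part~(i) bounds that below by $\lambda_1(1,1)$. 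One must check that the mass term in~\eqref{hat} also behaves monotonically under this shrinking, which it does since the boundary-trace coefficients $m/a$, $m/b$ only increase when $a$, $b$ decrease. This is the argument already alluded to as ``part~(ii) always follows as a consequence of part~(i)''.

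\textbf{The main obstacle} is not in this implication --- which is essentially the bookkeeping above --- but would be, of course, in proving Conjecture~\ref{Conj.symmetry} itself, and that is exactly what the authors disclaim. Within the implication, the one genuinely delicate point is establishing $\mu = \lambda_1(1,1)^2 - m^2$ rather than merely $\mu \leq \lambda_1(1,1)^2 - m^2$; this is where the symmetry Lemmas~\ref{Lem.symmetry} and~\ref{Lem.Corol} are indispensable, since without knowing the square's ground state is $R$-symmetric one could not convert its Rayleigh quotient into a value of~$J$. Once that identity is in hand, the chain $\lambda_1(a,a^{-1})^2 - m^2 \geq \mu = \lambda_1(1,1)^2 - m^2$ closes part~(i) and the scaling comparison closes part~(ii).
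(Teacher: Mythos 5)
Your proof is essentially correct and follows the same core strategy as the paper for part~(i): use the minimiser guaranteed by Conjecture~\ref{Conj.symmetry} as a trial function for the square's Rayleigh quotient, converting $J[\psi]$ back into the full quadratic form~\eqref{hat} at $a=b=1$ via the symmetry identities~\eqref{symmetry}. Two small slips are worth noting. First, the displayed identity $J[\psi]=\|\hat H_{1,1}\psi\|^2$ omits the mass term; from~\eqref{hat} one has $\|\hat H_{1,1}\psi\|^2 = \|\partial_1\psi\|^2 + \|\partial_2\psi\|^2 + m^2\|\psi\|^2 + m\|\gamma_\parallel\psi\|^2 + m\|\gamma_=\psi\|^2$, so for a $\psi$ satisfying~\eqref{symmetry} the correct identity is $J[\psi] = \|\hat H_{1,1}\psi\|^2 - m^2\|\psi\|^2$, which yields $\lambda_1(1,1)^2 - m^2 \leq J[\psi]/\|\psi\|^2 = \mu$; this is exactly what you need, so the typo does not propagate into an error. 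Second, the parenthetical ``requires also the reverse inequality $\lambda_1(1,1)^2-m^2\leq\mu$'' states the same inequality you just proved rather than its reverse; what it actually establishes is $\mu\leq\lambda_1(1,1)^2-m^2$ (so that $\mu=\lambda_1(1,1)^2-m^2$), but only the direction $\mu\geq\lambda_1(1,1)^2-m^2$ is needed. The paper packages the same idea a little differently: it restricts the infimum in~\eqref{area} to the set where~\eqref{symmetry} holds and shows that both $\lambda_1(a,a^{-1})^2-m^2$ and $\lambda_1(1,1)^2-m^2$ compare to that restricted infimum, so the two writings are equivalent in substance.

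For part~(ii) you take a genuinely different route from the paper. You rescale $\Omega_{a,2-a}$ homothetically to an area-one rectangle of the same aspect ratio and invoke monotonicity of~\eqref{hat} under simultaneous enlargement of both parameters $(a,b)\mapsto(\sigma a,\sigma b)$ with $\sigma=(a(2-a))^{-1/2}\geq 1$ (here $a(2-a)=1-(1-a)^2\leq 1$), after which part~(i) applies. This works precisely because the mass term $m^2\|\psi\|^2$ in~\eqref{hat} is $\sigma$-independent and the remaining coefficients $1/a^2,\,1/b^2,\,m/a,\,m/b$ all decrease as $\sigma$ increases --- but be aware that your phrase ``domain monotonicity under scaling'' is misleading: a genuine homothety of the physical domain does \emph{not} leave the Dirac spectrum covariant once $m>0$, and the argument only goes through because~\eqref{Rayleigh}--\eqref{hat} fixes the reference domain and moves the geometry into coefficients. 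The paper's version is one step shorter: it observes $2-a\leq 1/a$ for $a\in(0,2)$ and uses monotonicity in $b$ alone (with $a$ fixed), giving $\lambda_1(a,2-a)\geq\lambda_1(a,a^{-1})\geq\lambda_1(1,1)$. Both versions buy the same conclusion; the single-parameter monotonicity avoids introducing $\sigma$ and sidesteps any temptation to reason about physical scaling. Note also that your displayed argument of $\lambda_1$ in the rescaled rectangle, $\sqrt{a(2-a)}/a\cdot(2-a)$, does not simplify to $\sigma(2-a)=\sqrt{(2-a)/a}$; it should.
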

\begin{proof}
As a consequence of~\eqref{area} 
and Conjecture~\ref{Conj.symmetry},
\begin{equation}\label{comp1}  
  \lambda_1(a,a^{-1})^2 - m^2
  \geq \inf_{\stackrel[\psi \not= 0]{}
  {\psi \in \Dom(H_{1,1}) \ \& \ \eqref{symmetry} \text{ holds}}} 
  \frac{2\,\|\partial_1\psi\| \|\partial_2\psi\|
  +2m \, \|\gamma_\parallel\psi\| \|\gamma_=\psi\|}
  {\, \|\psi\|^2}  
  \,.
\end{equation}
At the same time,
as a consequence of Lemmata~\ref{Lem.symmetry} and~\ref{Lem.Corol},
\begin{equation}\label{comp2} 
  \lambda_1(1,1)^2 - m^2
  = \inf_{\stackrel[\psi \not= 0]{}
  {\psi \in \Dom(H_{1,1}) \ \& \ \eqref{symmetry} \text{ holds}}} 
  \frac{2\,\|\partial_1\psi\| \|\partial_2\psi\|
  +2m \, \|\gamma_\parallel\psi\| \|\gamma_=\psi\|}
  {\, \|\psi\|^2}  
  \,.
\end{equation}
Comparing~\eqref{comp1} and~\eqref{comp2}, 
we get part~(i) of Conjecture~\ref{Conj.main}.

For the perimeter constraint, 
let us take $b:=2-a$ and restrict ourselves to $a \in (0,2)$, 
so that $|\partial\Omega_{a,b}| = |\partial\Omega_{1,1}| = 4$. 
Recalling~\eqref{Rayleigh} with~\eqref{hat}
and noticing that 
$(2-a)^{-1} \geq a$ for every $a \in (0,2)$,
it follows that
\begin{equation*} 
  \lambda_1(a,2-a) 
  \geq \lambda_1(a,a^{-1}) 
  \geq \lambda_1(1,1) 
  \,,
\end{equation*}
where the last inequality is due to part~(i) of Conjecture~\ref{Conj.main}.
Hence, part~(ii) follows as a consequence of~(i).
\end{proof}
\begin{Remark}
Let us present a naive approach 
to establish a weaker variant Conjecture~\ref{Conj.symmetry},
which would be sufficient for the proof of Conjecture~\ref{Conj.main}
(along the lines of the proof of Theorem~\ref{Thm.idea}).
The idea is to employ Lemma~\ref{Lem.symmetry0}
and add the constraint~\eqref{beta} 
to the minimisation problem on the right-hand side of~\eqref{area}. 
Let~$\psi$ be any such minimiser.
We wish to show that it satisfies~\eqref{omega}
(which then implies the desired symmetry~\eqref{symmetry}
by Lemma~\ref{Lem.Corol}).

If $R\psi=\omega \psi$ with some $\omega \in \Com$ such that $|\omega|=1$,
then~$\psi$ is the desired minimiser.
Let us therefore assume that $R\psi - \omega \psi \not= 0$
for every $\omega \in \Com$ such that $|\omega|=1$.
If $\beta=1$ (\ie\ $R^2\psi = \psi$), then the function 
$
  v := \psi + R\psi + R^2 \psi + R^3 \psi = 2(\psi+R\psi) 
$ 
is non-zero and it is the desired minimiser 
due to the identity $Rv=v$ (implied by $R^4=I$).
Finally, if $\beta=-1$ (\ie\ $R^2u = -u$), then the function 
$
  w := \psi + i R\psi
$
is non-zero and it is the desired minimiser 
due to the identity $Rw = -iw$.

What is wrong with this argument?
Of course, it remains to show that the sums 
$\psi+R\psi$ and $\psi + i R\psi$ are also minimisers.
This is not obvious because the minimisation problem is not linear.
We are grateful to the anonymous referee
for identifying this gap in our argument
in a previous version of our paper.
\end{Remark}

\subsection*{Acknowledgment}
We are grateful to Dorin Bucur for useful discussions.
D.K.\ was supported
by the EXPRO grant No.~20-17749X
of the Czech Science Foundation.

\textbf{In memory of our colleague and friend Georgi Raikov (1954--2021).}

\subsection*{Data Availability Statements}
The data that supports the findings of this study 
are available within the article.

%
%

\providecommand{\bysame}{\leavevmode\hbox to3em{\hrulefill}\thinspace}
\providecommand{\MR}{\relax\ifhmode\unskip\space\fi MR }
\providecommand{\MRhref}[2]{%
  \href{http://www.ams.org/mathscinet-getitem?mr=#1}{#2}
}
\providecommand{\href}[2]{#2}

\end{document}